\newtheorem{theorem}{Theorem}[section]
\newtheorem{lemma}[theorem]{Lemma}
\newtheorem{problem}{Problem}
\newtheorem{conjecture}{Conjecture}
\newtheorem{claim}{Claim}
\begin{document}
\title{Minimum size of $n$-factor-critical graphs and $k$-extendable graphs\thanks{Research
supported by NSFC(10801077), Natural Science Foundation of
Guangdong Province(9451030007003340 and 9451009001002740), Natural
Science Foundation of Jiangsu Higher Education Institutions of
China(08KJB110008), and Science Research Foundation of Guangdong
Industry Technical College.}}
\author{Zan-Bo Zhang$^{1,2}$,
Xiaoyan Zhang$^{3,4}$\thanks{Corresponding Author. Email address:
royxyzhang@gmail.com}, Dingjun Lou$^2$, Xuelian Wen$^5$
\\ \small $^1$Department of Computer Engineering, Guangdong Industry Technical College, Guangzhou 510300, China.
\\ \small $^2$Department of Computer Science, Sun Yat-sen University, Guangzhou 510275, China.
\\ \small $^3$School of Mathematical Science, Nanjing Normal University, Nanjing, 210097, China.
\\ \small $^4$Faculty of Electrical Engineering, Mathematics and Computer Science, University of Twente,
\\ \small P.O. Box 217, 7500 AE Enschede, The Netherlands
\\ \small $^5$School of Economics and Management, South China Normal University, Guangzhou 510006, China.
}
\date{}
\maketitle

\begin{abstract}
We determine the minimum size of $n$-factor-critical graphs and
that of $k$-extendable bipartite graphs, by considering Harary
graphs and related graphs. Moreover, we determine the minimum size
of $k$-extendable non-bipartite graphs for $k=1,\ 2$, and pose a
related conjecture for general $k$.
$\newline$\noindent\textbf{Key words}: $k$-extendable,
$n$-factor-critical, minimum size
\end{abstract}
\section{Introduction and terminologies}
All graphs considered in this paper are finite, connected and
simple. Let $G$ be a graph with vertex set $V(G)$ and edge set
$E(G)$. The number of vertices in $G$ is denoted by $\nu(G)$ or
$\nu$. The number of edges in $G$ is called its \emph{size}. The
connectivity, the edge connectivity, the independence number, the
minimum degree and the maximum degree of $G$ are denoted by
$\kappa(G)$, $\kappa^\prime(G)$, $\alpha(G)$, $\delta(G)$ and
$\Delta(G)$, respectively. The neighborhood of a vertex $v$ in $G$
is denoted by $N_G(v)$ or $N(v)$. For other terminologies not
defined here, the reader is referred to \cite{BM1976}.

A connected graph $G$ is said to be \emph{$k$-extendable}, if it
contains a matching of size $k$ and every matching in $G$ of size
$k$ is contained in a perfect matching of $G$, where $k$ is an
integer such that $0\leq k\leq (\nu(G)-2)/2$. The concept of
$k$-extendable graphs was introduced by Plummer \cite{Plu1980}. A
graph $G$ is said to be \emph{$n$-factor-critical}, or
\emph{$n$-critical}, if $G-S$ has a perfect matching for every
$S\subseteq V(G)$ with $|S|=n$, where $0\leq n \leq \nu(G)-2$.
When $n=1$ or $2$, we say that $G$ is \emph{factor-critical} or
\emph{bicritical}. The concept of $n$-factor-critical graphs was
introduced by Yu \cite{Yu1993} and Favaron \cite{Fav1996},
independently. Extensive researches have been done on these two
classes of graphs. The reader may trace the important developments
on this field by referring to the surveys \cite{Plu1994},
\cite{Plu1996} and \cite{Plu2008} by Plummer, as well as Chapter 6
and Chapter 7 of the book \cite{YuLiu2009} by Yu and Liu.
Furthermore, a good description of the application of
extendibility and factor-criticality in job assignment can be
found in \cite{LiNie2009}.

In this paper, we determine the minimum size of a $k$-extendable
bipartite graph and that of an $n$-factor-critical graph, by
considering Harary graphs and related graphs. Then, we study the
problem of determining the minimum size of a $k$-extendable
non-bipartite graph. We solve the problem for $k=1,\ 2$, and pose
a conjecture related to the problem for general $k$.

We list some results and terminologies to be used later.

\begin{lemma} \label{lemma:Plu:k-ext}(Plummer \cite{Plu1986})
Let $G$ be a connected bipartite graph with bipartition $(U,W)$,
$k$ a positive integer such that $k\leq (\nu(G)-2)/2$. Then $G$ is
$k$-extendable if and only if $|U|=|W|$ and for all non-empty
subset $X$ of $U$ with $|X|\leq |U|-k$, $|N(X)|\geq |X|+k$.
\end{lemma}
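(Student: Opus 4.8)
The plan is to prove both directions, of which the forward (necessity) direction carries the real content. For sufficiency, assume $|U|=|W|=:n$ and the neighbourhood condition, and take any matching $M=\{u_1w_1,\dots,u_kw_k\}$ of size $k$ with $u_i\in U$, $w_i\in W$. I would show $G-V(M)$ has a perfect matching, which together with $M$ gives a perfect matching of $G$ containing $M$. The graph $G-V(M)$ is bipartite with parts $U'=U\setminus\{u_1,\dots,u_k\}$ and $W'=W\setminus\{w_1,\dots,w_k\}$, where $|U'|=|W'|=n-k$. For every nonempty $Y\subseteq U'$ we have $|Y|\le n-k$, so the hypothesis yields $|N_G(Y)|\ge|Y|+k$; since $N_{G-V(M)}(Y)=N_G(Y)\setminus\{w_1,\dots,w_k\}$ deletes at most $k$ vertices, $|N_{G-V(M)}(Y)|\ge|Y|$. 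Thus Hall's condition holds in $G-V(M)$ and the required perfect matching exists. This direction is routine.

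For necessity, note first that a $k$-extendable graph has a perfect matching $F$, so $|U|=|W|=n$. Arguing by contradiction, suppose some nonempty $X\subseteq U$ with $|X|\le n-k$ satisfies $|N(X)|\le|X|+k-1$, and write $r=|N(X)|-|X|$, so $0\le r\le k-1$ (the lower bound comes from $F$ matching $X$ injectively into $N(X)$). The goal is to build a matching of size exactly $k$ that lies in no perfect matching, contradicting $k$-extendability. The mechanism I would use: if a $k$-matching $M$ contains $b$ edges joining $N(X)$ to $U\setminus X$, then in $G-V(M)$ the surviving part of $X$ sees at most $|N(X)|-b$ neighbours, and a short count shows its deficiency is at least $b-r$; hence any $k$-matching with $b\ge r+1$ such edges (and not exhausting $X$) strands $X$. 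Let $H$ be the bipartite graph on $N(X)\cup(U\setminus X)$ consisting of the edges of $G$ between these sets; restricting $F$ shows $\nu(H)\ge r$, since the $r$ vertices of $N(X)$ not matched into $X$ by $F$ are matched into $U\setminus X$. I would then split on the value of $\nu(H)$.

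If $\nu(H)\ge r+1$, I take $r+1$ independent $H$-edges, pad them with leftover edges of $F$ to a matching of size $k$, and the mechanism above strands $X$. The crux, which I expect to be the main obstacle, is the case $\nu(H)=r$: here the trivial bound cannot be beaten directly, and one must instead exploit connectivity. By König's theorem there is a cover $P\subseteq N(X)$, $Q\subseteq U\setminus X$ of all $H$-edges with $|P|+|Q|=r$; putting $Z=N(X)\setminus P$, every neighbour of $Z$ lies in $X\cup Q$, and a size comparison against Hall's condition (via $F$) forces $N(Z)=X\cup Q$ with $|N(Z)|=|Z|$, so $Z$ is a tight Hall set on the $W$-side. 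Because its surplus is $0$, stranding $Z$ requires only a single edge from $N(Z)=X\cup Q$ to $W\setminus Z$; and such an edge must exist, for otherwise $(X\cup Q)\cup Z$ would be a connected component, whence $G=(X\cup Q)\cup Z$ and $|Q|=n-|X|\ge k$, contradicting $|Q|\le r\le k-1$. Using this one edge, again padded by leftover $F$-edges to size $k$, strands $Z$ and contradicts $k$-extendability. The padding steps are routine because $\nu=\nu(G)\ge 2k+2$ leaves enough disjoint $F$-edges available; the genuine difficulty is precisely this König-plus-connectivity argument supplying the extra edge that the perfect matching alone cannot.
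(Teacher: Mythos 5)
The paper itself gives no proof of this lemma --- it is quoted from Plummer's 1986 paper --- so your argument can only be judged on its own terms. Your sufficiency direction and your Case $\nu(H)=r$ (the K\"onig-plus-connectivity step) are correct, and the stranding mechanism is sound; indeed the caveat ``not exhausting $X$'' is automatic, because a $k$-matching with $b\ge r+1$ edges between $N(X)$ and $U\setminus X$ covers at most $|N(X)|-b\le |X|-1$ vertices of $X$.

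The gap is the padding in the case $\nu(H)\ge r+1$. For an \emph{arbitrary} choice of $r+1$ independent $H$-edges $M_0$, the number of $F$-edges disjoint from $V(M_0)$ is only guaranteed to be $n-2(r+1)$ (writing $n=|U|$), while you need $k-r-1$ of them; the required inequality $n-2(r+1)\ge k-(r+1)$ amounts to $n\ge k+r+1$, whereas $\nu\ge 2k+2$ only gives $n\ge k+1$. Thus for $1\le r\le k-2$ and $n\le k+r$ (i.e.\ $2k+2\le\nu\le 4k-2$) the ``leftover edges of $F$'' need not exist, and the step as written is not justified --- it is precisely in this range of $\nu$ that the lemma is delicate. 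The repair is to choose $M_0$ carefully: the $r$ edges of $F$ joining $N(X)$ to $U\setminus X$ form a non-maximum matching $F_H$ of $H$, so augmenting along an $F_H$-augmenting path yields $M_0$ with $V(M_0)=V(F_H)\cup\{p,q\}$, where $p$ is $F$-matched into $X$ and $q$ is $F$-matched into $W\setminus N(X)$; then exactly $r+2$ edges of $F$ meet $V(M_0)$, leaving $n-r-2\ge k-r-1$ disjoint $F$-edges, which suffices. (Alternatively, invoke the standard fact that a $k$-extendable graph is $(r+1)$-extendable and pad $M_0$ inside a perfect matching containing it.) With that single adjustment your proof is complete.
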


\begin{lemma} \label{lemma:YF:k-fc}
(Yu \cite{Yu1993}, Favaron \cite{Fav1996}). A graph $G$ is
$n$-factor-critical if and only if $\nu(G)\equiv n\ (mod\ 2)$ and
for any vertex set $S\subseteq V(G)$ with $|S|\geq n$, $o(G-S)\leq
|S|-n$, where $o(G-S)$ denote the number of odd components of
$G-S$.
\end{lemma}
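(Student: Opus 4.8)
The plan is to derive both directions from Tutte's theorem (see \cite{BM1976}), which states that a graph $H$ has a perfect matching if and only if $o(H-R)\le |R|$ for every $R\subseteq V(H)$. The guiding idea is to read the $n$-factor-critical condition as a uniform ``deficiency $n$'' strengthening of Tutte's condition, so that after deleting a fixed $n$-set the ordinary Tutte condition is recovered.

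For the sufficiency direction, I would fix an arbitrary $T\subseteq V(G)$ with $|T|=n$ and show that $G-T$ has a perfect matching. The key observation is that every $R\subseteq V(G-T)$ corresponds to the set $S=T\cup R$ in $G$, which satisfies $|S|=n+|R|\ge n$ and $(G-T)-R=G-S$. Applying the counting hypothesis to $S$ gives $o\bigl((G-T)-R\bigr)=o(G-S)\le |S|-n=|R|$, so Tutte's condition holds for $G-T$ and $G-T$ has a perfect matching. The parity hypothesis $\nu(G)\equiv n\pmod 2$ guarantees that $\nu(G-T)$ is even (as is required for a perfect matching); in fact this is already forced by the case $R=\emptyset$. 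Since $T$ was an arbitrary $n$-subset, $G$ is $n$-factor-critical.

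For the necessity direction, I would assume $G$ is $n$-factor-critical. The parity condition is immediate: choosing any $S$ with $|S|=n$, the graph $G-S$ has a perfect matching, so $\nu(G)-n=\nu(G-S)$ is even. For the counting condition, given any $S$ with $|S|\ge n$, I would select a subset $T\subseteq S$ with $|T|=n$ (possible precisely because $|S|\ge n$), so that $G-T$ has a perfect matching. Writing $R=S\setminus T\subseteq V(G-T)$, the necessity half of Tutte's theorem applied to $G-T$ yields $o\bigl((G-T)-R\bigr)\le |R|$, that is $o(G-S)\le |S|-n$, as desired.

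The argument is essentially a bookkeeping reduction, so I do not expect a deep obstacle; the only care needed is in the translation of vertex sets (checking that $R=S\setminus T$ lies in $V(G-T)$ and that $(G-T)-R=G-S$) and in correctly matching the deficiency $|R|$ in Tutte's bound to the quantity $|S|-n$. I would also note that the counting condition specialized to $|S|=n$ already forces the stated parity, so the two hypotheses are not logically independent; nonetheless I would verify both in order to match the statement verbatim.
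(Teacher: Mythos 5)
Your argument is correct: the reduction of the $n$-factor-critical condition to Tutte's theorem via the substitution $S=T\cup R$ (and, conversely, $T\subseteq S$, $R=S\setminus T$) is exactly the standard proof, and all the bookkeeping you flag — $(G-T)-R=G-S$, the deficiency $|R|=|S|-n$, and the parity being forced by the case $|S|=n$ — checks out. Note that the paper itself offers no proof of this lemma; it is quoted as a known result of Yu and Favaron, and your derivation matches the argument given in those references.
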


\begin{lemma} \label{lemma:Plu:con} (Plummer \cite{Plu1980})
A $k$-extendable graph $G$ is $(k+1)$-connected.
\end{lemma}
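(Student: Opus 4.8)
The plan is to argue by contradiction: assuming $G$ has a vertex cut of size at most $k$, I will build a matching of size $k$ that lies in no perfect matching, contradicting $k$-extendability. First I record the basic structure. Since $G$ is $k$-extendable it contains a matching of size $k$ extending to a perfect matching, so $G$ has a perfect matching; in particular $\nu(G)$ is even and $\nu(G)\ge 2k+2\ge k+2$, which is the order bound required for $(k+1)$-connectivity. As $G$ is connected, any failure of $(k+1)$-connectivity yields a vertex cut $S$ with $t:=|S|\le k$; let $C_1,\dots,C_c$ (with $c\ge 2$) be the components of $G-S$.

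The key step is to produce a matching $F$ with $|F|=k$ such that $G-V(F)$ contains an odd component, for then $G-V(F)$ has no perfect matching and $F$ cannot be extended. I would first saturate $S$ by a matching $M_0$ all of whose edges join $S$ to $\bigcup_i C_i$. Such a matching exists by Hall's theorem applied to the bipartite graph between $S$ and $\bigcup_i C_i$: if Hall's condition failed for some nonempty $X\subseteq S$ with neighbourhood $Y\subseteq\bigcup_i C_i$, then $Y\cup(S\setminus X)$ would be a vertex cut of size $|Y|+t-|X|<t$ separating $X$ from $(\bigcup_i C_i)\setminus Y$, contradicting $t=\kappa(G)$. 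Writing $f_i$ for the number of edges of $M_0$ meeting $C_i$, we have $\sum_i f_i=t$, and after deleting $V(M_0)$ the surviving part of $C_i$ has order congruent to $|C_i|-f_i \pmod 2$.

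Now I would use parity together with $c\ge 2$ to force an odd surviving piece. If some $f_j\not\equiv|C_j|\pmod 2$, then the remainder of $C_j$ already has odd order and hence contains an odd component. Otherwise every $f_i\equiv|C_i|\pmod 2$, and I would reroute one edge of $M_0$ so that one unit of coverage moves from $C_2$ to $C_1$; this switches the parities of both $|C_1|-f_1$ and $|C_2|-f_2$ and again creates an odd remainder. Finally, if $t<k$ I pad $F$ up to size exactly $k$ by adding $k-t$ independent edges inside the components, placing them so as not to destroy the chosen odd piece; since each internal edge deletes two vertices it preserves the relevant parity, and $\nu\ge 2k+2$ guarantees enough room. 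The resulting $F$ has size $k$ and leaves an odd component in $G-V(F)$, the desired contradiction.

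I expect the main obstacle to be the parity-control and rerouting in the last paragraph: guaranteeing that the saturating matching can be modified to shift exactly one unit of coverage between two prescribed components requires a careful alternating-path argument inside the $S$-to-$\bigcup_i C_i$ bipartite graph, and one must verify that this rerouting and the padding edges can be carried out simultaneously without interfering with the odd component produced.
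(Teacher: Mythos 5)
The paper does not prove this lemma; it is quoted from Plummer's 1980 paper, so there is no internal proof to compare against. Your overall strategy --- cover a small cutset $S$ by a matching so that an odd component is stranded, then extend to a matching of size $k$ that lies in no perfect matching --- is a legitimate way to attack the statement, and your Hall-type argument that a minimum cut $S$ can be saturated by edges into $\bigcup_i C_i$ is correct. But the two steps you yourself flag as delicate are genuine gaps, not technicalities. First, the rerouting: for a general graph with a minimum cut, the parity profile $(f_1 \bmod 2,\dots,f_c \bmod 2)$ of an $S$-saturating matching can be completely forced (e.g.\ if two vertices of $S$ have the same single neighbour in $C_1$ and the same single neighbour in $C_2$, every saturating matching has $f_1=f_2=1$), so no alternating-path argument in the bipartite graph between $S$ and $\bigcup_i C_i$ can, by itself, shift a unit of coverage. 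Such configurations are ultimately incompatible with $k$-extendability (via $\delta(G)\ge k+1$ and the independence of $N(u)$ for degree-$(k+1)$ vertices, i.e.\ Lemmas \ref{lemma:Plu:con} and \ref{lemma:Plu:ind} themselves, or via the expansion $|N(S)\cap C_i|\ge\min(t,|C_i|)$ forced by $\kappa(G)=t$), but that is exactly the content you would need to supply, and it requires a real case analysis (for instance, when some component satisfies $|C_i|\ge t$ one can re-aim the whole matching into it, whereas the case where all components are smaller than $t$ needs separate treatment).

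Second, the padding: ``$\nu\ge 2k+2$ guarantees enough room'' does not give you $k-t$ independent edges inside $G-V(M_0)$; having $2(k-t)+2$ leftover vertices is useless if they induce (nearly) an independent set. Repairing this needs either a bound on $\alpha(G)$ (the uncovered vertices of a maximal matching are independent, so a short matching forces a large independent set), or, much more cleanly, the companion theorem that a $k$-extendable graph is $(k-1)$-extendable: then $G$ is $t$-extendable, $M_0$ itself must extend to a perfect matching, and no padding is needed. That descending-extendability theorem is in fact how the connectivity statement is usually proved (by induction on $k$: $G$ is $(k-1)$-extendable, hence $k$-connected, hence a cutset has size exactly $k$, and only then does one strand an odd component). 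As written, your argument would have to succeed for an arbitrary graph with a cut of size at most $k$ admitting a perfect matching, which it cannot; the missing ingredient in both places is a mechanism for feeding the extendability hypothesis back into the construction of the matching.
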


By Lemma \ref{lemma:Plu:con}, in a $k$-extendable graph $G$,
$\delta(G)\geq \kappa(G) \geq k+1$.

\begin{lemma} \label{lemma:Plu:ind} (Plummer \cite{Plu1980})
If $G$ is a $k$-extendable graph and if $u$ is a vertex of degree
$k+1$ in $G$, then $N(u)$ is independent.
\end{lemma}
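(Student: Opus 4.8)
The plan is to argue by contradiction. Suppose $N(u)$ is not independent, and pick two adjacent neighbors $w_1,w_2\in N(u)$, writing $N(u)=\{w_1,w_2,w_3,\dots,w_{k+1}\}$. The whole proof reduces to producing a single matching of size exactly $k$ that cannot be extended to a perfect matching. The candidate is $M=\{w_1w_2\}\cup N$, where $N$ is a matching of size $k-1$ inside $G-u-w_1-w_2$ that saturates $\{w_3,\dots,w_{k+1}\}$. If such an $M$ exists, then $M$ is a matching of size $k$ that saturates \emph{every} neighbor of $u$ while leaving $u$ exposed; hence no perfect matching can contain $M$, since matching $u$ would force a second edge at an already-saturated neighbor. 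This contradicts $k$-extendability and finishes the proof. So the entire burden is to exhibit the matching $N$.

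To build $N$, I would start from a perfect matching $M^*$ of $G$, which exists because a $k$-extendable graph contains a matching of size $k$ that extends to a perfect matching. For each $j\in\{3,\dots,k+1\}$, consider the $M^*$-edge covering $w_j$. Most of these edges already avoid $\{u,w_1,w_2\}$ and can be placed directly into $N$ (an $M^*$-edge of the form $w_jw_{j'}$ with both indices at least $3$ is also fine, as it covers two targets at once). The only obstructions are the $w_j$ whose $M^*$-partner lies in $\{u,w_1,w_2\}$, and there are at most three such indices. These must be re-matched to partners outside $\{u,w_1,w_2\}$, and the padding to reach size exactly $k-1$ must be arranged disjointly from everything already chosen.

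The main obstacle is precisely this re-routing together with the exact size requirement. I would recover each contaminated $w_j$ by following an $M^*$-alternating path that reassigns its partner away from $\{u,w_1,w_2\}$, using $\delta(G)\ge k+1$ from Lemma~\ref{lemma:Plu:con} to guarantee the needed room, and taking care that the (at most three) paths are chosen vertex-disjoint so that the resulting edge set is a genuine matching. The delicate case is when $\nu(G)$ is as small as $2k+2$: then $G-u-w_1-w_2$ has only $2k-1$ vertices, so $N$ must be a near-perfect matching of that subgraph missing a vertex outside $\{w_3,\dots,w_{k+1}\}$, leaving essentially no slack. Organizing the coincidence cases (for instance $M^*(w_1)$ or $M^*(w_2)$ being one of the $w_j$, or $u$ being matched into $N(u)$) and controlling the size in this tight regime is where the real work lies; the $(k+1)$-connectivity is the tool I expect to rely on to push these alternating-path adjustments through.
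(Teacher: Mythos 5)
The paper offers no proof of this lemma --- it is quoted from Plummer's 1980 paper --- so there is no in-text argument to measure yours against; I can only judge the proposal on its own terms. Your reduction is sound as far as it goes: if $w_1w_2\in E(G)$ and you can exhibit a matching $M$ of size exactly $k$ containing $w_1w_2$, saturating all of $N(u)$ and missing $u$, then no perfect matching contains $M$ (matching $u$ would place a second edge at an already-saturated neighbour), contradicting $k$-extendability. That is indeed the standard skeleton.

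The problem is that the existence of $M$ --- equivalently of your matching $N$ --- \emph{is} the lemma, and you have not established it; you explicitly defer ``the real work'' to an unspecified alternating-path argument, and the tools you invoke do not obviously suffice. When you re-route a target $w_j$ whose $M^*$-partner lies in $\{u,w_1,w_2\}$, the new partner must avoid $u$, $w_1$, $w_2$ and every vertex already placed in $N$, a forbidden set of size up to $2k-1$, whereas $\deg(w_j)\ge k+1$ guarantees only $k+1$ candidates; so for $k\ge 2$ a fresh neighbour need not exist, a genuine exchange argument is required, and the (up to three) exchange paths must additionally be kept vertex-disjoint. Similarly, when $M^*$ pairs two targets $w_jw_{j'}$ you save an edge and must pad $N$ back up to size exactly $k-1$ with edges disjoint from everything chosen, but the leftover vertices may induce no edge at all, so padding is not automatic either. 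None of this is carried out, and in the extremal case $\nu(G)=2k+2$, where $N$ must be a near-perfect matching of $G-u-w_1-w_2$ missing a vertex outside $\{w_3,\dots,w_{k+1}\}$, I do not see how the sketch closes. One way to make progress: pass to $G'=G-w_1-w_2$ and note that every matching of size $k-1$ in $G'$ extends to a perfect matching of $G'$ (adjoin $w_1w_2$, extend in $G$, delete $w_1w_2$), while $u$ has degree $k-1$ in $G'$; for $k=1$ this isolates $u$ and finishes the proof outright, and for general $k$ it isolates the combinatorial core you still need to resolve.
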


\begin{lemma} \label{lemma:Fav:nfc-con} (Favaron \cite{Fav1996}, Liu and Yu \cite{LiuYu1998})
An $n$-factor-critical graph $G$ is $n$-connected,
$(n+1)$-edge-connected, and $\delta(G)\geq n+1$.
\end{lemma}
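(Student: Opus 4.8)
The plan is to derive all three bounds uniformly from the Tutte-type characterization in Lemma~\ref{lemma:YF:k-fc}. Since $G$ is $n$-factor-critical we have $\nu(G)\ge n+2$, $\nu(G)\equiv n\pmod 2$, and, crucially, $o(G-X)\le |X|-n$ for every $X\subseteq V(G)$ with $|X|\ge n$. Hence in each case it suffices to produce a set $X$ with $|X|=n$ whose deletion leaves an odd component: then $o(G-X)\ge 1>0=|X|-n$, contradicting Lemma~\ref{lemma:YF:k-fc}. Throughout I will use two elementary facts: a graph of odd order has at least one odd component, and deleting successive leaves of a spanning tree of a connected graph keeps it connected while decreasing its order by one. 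I would prove $\delta(G)\ge n+1$ first, as it feeds into the edge-connectivity argument: if some vertex $v$ had $\deg(v)\le n$, I take $X$ to consist of $N(v)$ together with $n-\deg(v)$ arbitrary further vertices avoiding $v$ (possible since $\nu\ge n+2$); then $v$ is isolated in $G-X$, so $\{v\}$ is an odd component, the desired contradiction.

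For $\kappa(G)\ge n$, suppose $S$ is a vertex cut with $|S|=t\le n-1$, so that $G-S$ has at least two components. The idea is to enlarge $S$ to an $n$-set $X$ that still leaves an odd component. I pick one component $C$ and, using leaf-deletions, shrink $C$ while keeping it connected until it has odd order, spending the rest of the budget $n-t\ge 1$ by deleting vertices of the other components (and, if necessary, further leaves of $C$ in pairs so as to preserve its parity). Since $\nu-t\ge \nu-n+1$ and $n-t\ge 1$, this can always be arranged so that $|X|=n$ while the shrunken copy of $C$ survives as an odd component of $G-X$. The bookkeeping that makes the parity and the budget agree is routine here.

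The main difficulty is $\kappa'(G)\ge n+1$, since vertex-connectivity alone yields only $p\ge n$ for a minimum edge cut $[A,B]$ of size $p=|[A,B]|$, so the extra unit must come from the edge structure. The key observation is that, unlike a vertex cut, each cut edge can be destroyed by deleting \emph{either} of its endpoints; I am therefore free to choose, for each of the $p\le n$ cut edges, one endpoint to place in $X$, obtaining a set of at most $p\le n$ vertices that separates the surviving $A$-side from the surviving $B$-side. A short degree count using $\delta(G)\ge n+1$ shows that neither side can be entirely consumed: if every vertex of $A$ met a cut edge, then $|A|\le p$ and summing degrees over $A$ gives $p\ge |A|(n+2-|A|)\ge n+1$, a contradiction; hence both surviving sides are nonempty. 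Now the endpoint-choice freedom, together with padding $X$ up to size exactly $n$, lets me steer the parity of the surviving $A$-side, since switching one cut edge from its $A$-endpoint to its $B$-endpoint (or reallocating a padding vertex) changes that side's order by one.

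I would then arrange the surviving $A$-side to have odd order, so that it contains an odd component and $o(G-X)\ge 1$, contradicting Lemma~\ref{lemma:YF:k-fc}; the bound $\delta(G)\ge\kappa'(G)\ge n+1$ follows as well, consistent with the direct estimate above. This parity-steering through the choice of cut-edge endpoints is precisely what produces the $+1$ separating the edge-connectivity bound from the vertex-connectivity bound, and it is the step I expect to require the most care: the clean matching case, where the cut edges have distinct endpoints and the vertex cover has size exactly $n$, is where the endpoint-switching freedom is genuinely needed, and I must check that a switch always exists and leaves the surviving side nonempty, using the degree count above.
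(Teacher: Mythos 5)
The paper offers no proof of this lemma: it is imported as a known result of Favaron \cite{Fav1996} and Liu--Yu \cite{LiuYu1998}, so there is no in-paper argument to compare against; I can only assess your derivation on its own terms, and it checks out. Deriving all three bounds from the Tutte-type criterion of Lemma~\ref{lemma:YF:k-fc} is the natural route. The minimum-degree step is complete. For $\kappa(G)\ge n$ the ``routine bookkeeping'' does go through, and is easiest if you invoke $\delta(G)\ge n+1$ here as well: each component of $G-S$ then has at least $n-t+2$ vertices, so the $n-t$ extra deletions can all be taken as leaf-deletions inside one component, with at most one deletion shifted to a second component to fix the parity of the first remnant. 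For $\kappa'(G)\ge n+1$, your count $p\ge 2e(G[A])+p-|A|(|A|-1)\ge |A|(n+2-|A|)\ge n+1$ for $|A|\le n$ is correct and is indeed the crux, since it gives $|A|>|A'|$ and $|B|>|B'|$ where $A'$, $B'$ are the sets of cut-edge endpoints. One simplification makes your parity-steering close cleanly: once $|A\setminus X|$ is odd, $|B\setminus X|=\nu-n-|A\setminus X|$ is automatically odd too, so both sides are automatically nonempty and the only constraints on $X$ are that it covers the cut, has size $n$, and satisfies $|X\cap A|\le|A|$, $|X\cap B|\le|B|$. Then: if $|A'|<n$ (or symmetrically $|B'|<n$) there is at least one padding vertex, and using $|A|>|A'|$, $|B|>|B'|$ the set of achievable values of $|X\cap A|$ contains two consecutive integers; if $|A'|=|B'|=n=p$ the cut edges form a perfect matching between $A'$ and $B'$ and $|X\cap A|$ ranges over all of $\{0,\dots,n\}$. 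So the endpoint switch you flag as the delicate point always exists, and your proof is sound once these two deferred bookkeeping steps are written out.
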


\begin{lemma} \label{lemma:LY:largek}
(Lou and Yu \cite{LouYu2004}) If $G$ is a $k$-extendable
non-bipartite graph on $\nu$ vertices with $k\geq \nu/4$, then
$\kappa(G)\geq 2k$.
\end{lemma}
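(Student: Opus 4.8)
The plan is to argue by contradiction. Suppose $\kappa(G)=s\le 2k-1$. Since $G$ is $k$-extendable, Lemma \ref{lemma:Plu:con} gives $\kappa(G)\ge k+1$, so $k+1\le s\le 2k-1$. Fix a minimum vertex cut $S$ with $|S|=s$ and let $C_1,\dots,C_m$ (with $m\ge 2$) be the components of $G-S$. The hypothesis $k\ge \nu/4$ is equivalent to $\nu\le 4k$, so $|C_1|+\cdots+|C_m|=\nu-s\le 4k-(k+1)=3k-1$. This inequality is the quantitative engine of the proof: it forces the vertices outside $S$ to be few, so at most one component can be large and every ``small'' piece I will want to isolate has at most $(3k-1)/2$ vertices. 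I also record that $G$, being $k$-extendable, has a perfect matching (extend any matching of size $k$), so $\nu$ is even, and that $G$ is $k'$-extendable for every $0\le k'\le k$.

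The core of the argument is an \emph{isolation} strategy. I would pick a component $C$ (a smallest one is natural) and set $S_C=N(C)\cap S$, the cut vertices adjacent to $C$. If I can find a matching $M$ of size exactly $k$ that saturates $S_C$ by edges running into the other components $B=\bigcup_{C_i\ne C}C_i$ (padding with extra edges inside $B$ to reach size $k$) while leaving $C$ untouched, then in $G-V(M)$ the set $C$ has no neighbour outside itself, so $C$ is a union of components of $G-V(M)$. If $|C|$ is odd, then $C$ cannot be perfectly matched, hence $G-V(M)$ has no perfect matching and $M$ fails to extend, contradicting $k$-extendability. Two conditions must be arranged, both controlled by the size bound above: first, $|S_C|\le s\le 2k-1$ must in fact be at most $k$ so that $S_C$ can be covered by a matching of size $k$, which I would extract by bounding neighbourhoods via Lemma \ref{lemma:Plu:ind} and the $(k+1)$-connectivity; second, $B$ must be large enough to receive a matching saturating $S_C$ and to absorb the padding edges, which follows from a Hall-type argument on the bipartite graph between $S_C$ and $B$ together with $|B|=\nu-s-|C|$ being comparable to $k$.

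The genuine obstacle is \emph{parity}, and this is exactly where the non-bipartiteness hypothesis becomes indispensable. The isolation trick yields a contradiction only when some isolatable piece has odd order, but $\sum_i|C_i|=\nu-s$ need not produce an odd component when $s$ is even; indeed, in a bipartite graph one can have two large even shores with no odd piece at all, which is precisely why the bipartite analogue fails and $\kappa=k+1$ remains achievable there. To force an odd piece I would exploit that $G$ contains an odd cycle: by choosing $M$ so that after removal an odd cycle is trapped on the $C$-side, or by shifting a single vertex across $S$ along such a cycle, I can flip the parity of the isolated piece and guarantee an odd component while keeping $|M|=k$. Reconciling this parity correction with the saturation and padding demands of the previous paragraph, that is, simultaneously controlling the matching size, the Hall condition, and the parity through the odd cycle, is the technical heart of the proof, and it is here that the bound $k\ge \nu/4$ must be used most carefully so that all the competing size constraints can be satisfied at once.
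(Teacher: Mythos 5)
This lemma is quoted from Lou and Yu \cite{LouYu2004}; the paper gives no proof of it, so your proposal can only be judged on its own merits, and as it stands it has a genuine gap --- in fact its central mechanism cannot be set up as described. You propose to isolate a component $C$ of $G-S$ by a matching $M$ of size $k$ that saturates $S_C=N(C)\cap S$ using only edges from $S_C$ into $B=\bigcup_{C_i\neq C}C_i$, and you say that $|S_C|$ ``must in fact be at most $k$.'' But if $S$ is a \emph{minimum} vertex cut and $C$ is any component of $G-S$, then $N(C)$ is itself a cut contained in $S$, so minimality forces $N(C)=S$ and hence $|S_C|=s\geq k+1>k$. A matching of size $k$ in which every edge meets $S_C$ in exactly one vertex covers at most $k$ vertices of $S_C$, so it can never saturate $S_C$; to cover all of $S$ while avoiding $C$ you would need at least $s-k\geq 1$ matching edges inside $G[S]$, and nothing in your argument (nor in Lemma \ref{lemma:Plu:ind}, which if anything pushes toward $N(u)$ being independent for low-degree vertices) guarantees that $G[S]$ contains any edge at all. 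So the first of your two ``conditions to be arranged'' is not merely unverified --- it is false, and the isolation step collapses.

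The second half of the proposal, the parity correction via an odd cycle, is also not a proof: you state that one can ``trap an odd cycle on the $C$-side'' or ``shift a single vertex across $S$'' to flip the parity of the isolated piece, and you yourself identify reconciling this with the saturation and padding constraints as ``the technical heart of the proof,'' but you do not carry it out. Since the non-bipartiteness hypothesis is exactly what separates this statement from the bipartite case (where $\kappa=k+1$ is attainable), leaving that step as an aspiration means the essential content of the lemma is unproved. What you do have correctly is the opening bookkeeping: $k+1\leq s\leq 2k-1$ from Lemma \ref{lemma:Plu:con}, and $|V(G)\setminus S|\leq 3k-1$ from $\nu\leq 4k$. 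To go further you would need either a Tutte--Berge style counting argument on $G-V(M)$ that does not require covering all of $S$, or the route via $2k$-factor-criticality (Lemma \ref{lemma:ZWL:eqv} together with Lemma \ref{lemma:Fav:nfc-con}) for $\nu\leq 4k-2$ with a separate treatment of $\nu=4k$ --- but the latter is circular in spirit here, since the equivalence result postdates and partly relies on the connectivity bound you are trying to prove.
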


\begin{lemma} \label{lemma:ZWL:eqv}
(Zhang et al. \cite{ZhaWanLou2007}) Let $G$ be a non-bipartite
graph on $\nu$ vertices, and $k$ an integer such that $k\geq
(\nu+2)/4$. Then $G$ is $k$-extendable if and only if it is
$2k$-factor-critical.
\end{lemma}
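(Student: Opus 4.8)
The plan is to prove the two implications separately, using the Tutte-type characterization of factor-criticality in Lemma~\ref{lemma:YF:k-fc} as the common language, and reserving the hypotheses (non-bipartite, $k\ge(\nu+2)/4$) for the reverse implication. Throughout, note that a $k$-extendable graph has a perfect matching, so $\nu$ is even and the parity condition $\nu\equiv 2k\pmod 2$ of Lemma~\ref{lemma:YF:k-fc} is automatic.

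First I would prove that $2k$-factor-critical implies $k$-extendable; this needs neither hypothesis. A $2k$-factor-critical graph $G$ is also $(2k-2)$-factor-critical: for $|S|=2k-2$, Lemma~\ref{lemma:Fav:nfc-con} gives $\delta(G)\ge 2k+1$, so $G-S$ has minimum degree $\ge 3$ and hence contains an edge $xy$; since $|S\cup\{x,y\}|=2k$, the graph $G-(S\cup\{x,y\})$ has a perfect matching, to which $xy$ may be added. Iterating, $G$ is $0$-factor-critical, i.e.\ has a perfect matching, of size $\nu/2\ge k+1$, so $G$ has a matching of size $k$. Now given any matching $M$ of size $k$, the set $V(M)$ has exactly $2k$ vertices, so $G-V(M)$ has a perfect matching $M'$ by $2k$-factor-criticality, and $M\cup M'$ is a perfect matching containing $M$. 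Hence $G$ is $k$-extendable.

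For the converse I would first record a clean reformulation. For a graph with a perfect matching, a $k$-matching $M$ fails to extend if and only if, by Tutte's theorem, some $W$ disjoint from $V(M)$ satisfies $o(G-V(M)-W)>|W|$; writing $R=V(M)\cup W$ this says exactly that there is a set $R$ with $o(G-R)\ge|R|-2k+2$ (parity forcing the gap to be at least $2$) such that $G[R]$ contains a matching of size $k$. Comparing this with Lemma~\ref{lemma:YF:k-fc} yields a dichotomy: if $G$ is $k$-extendable but \emph{not} $2k$-factor-critical, then there is a ``barrier'' $R_0$ with $|R_0|\ge 2k$ and $o(G-R_0)\ge|R_0|-2k+2$, while no barrier $R$ (any set with $|R|\ge 2k$ and $o(G-R)\ge|R|-2k+2$) can contain a $k$-matching, for such a matching would be non-extendable. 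In particular $\nu(G[R_0])\le k-1$, so a maximum matching of $G[R_0]$ leaves an independent set $I\subseteq R_0$ with $|I|\ge|R_0|-2(k-1)=|R_0|-2k+2$ exposed.

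The remaining task — and the heart of the proof — is to show this configuration cannot occur for a non-bipartite graph with $k\ge(\nu+2)/4$. The density hypothesis gives $\nu\le 4k-2$, hence at most $2k-2$ vertices lie outside $R_0$; since $o(G-R_0)\le\nu-|R_0|$ this pins $|R_0|$ into $2k\le|R_0|\le 3k-2$ and forces every odd component of $G-R_0$ to have at least $2k$ neighbours in $R_0$ (using $\kappa(G)\ge 2k$ from Lemma~\ref{lemma:LY:largek}, valid since $k\ge\nu/4$). I expect the main obstacle to be extracting a contradiction from the exposed independent set $I$: the intended mechanism is that $G$, having no edge inside $I$ and such a tightly constrained cut structure, would be forced to be bipartite with $I$ on one side, unless one can relocate a few vertices between $R_0$ and the components to obtain a barrier whose induced subgraph \emph{does} contain a $k$-matching, which the dichotomy forbids. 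Converting ``non-bipartite'' into such a local modification — e.g.\ routing an odd cycle through the dense $R_0$-to-component adjacencies to manufacture the missing matching edges inside a barrier — is the delicate step, and is precisely where the non-bipartiteness and the counting $\nu\le 4k-2$ must be used together.
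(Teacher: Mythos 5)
This lemma is quoted in the paper from \cite{ZhaWanLou2007} without proof, so there is no in-paper argument to compare against; the question is simply whether your attempt stands on its own, and it does not. The easy direction is fine: your descent from $2k$-factor-critical to a perfect matching (peeling off an edge of $G-S$ using $\delta(G)\ge 2k+1$) and the observation that deleting $V(M)$ for a $k$-matching $M$ leaves a graph with a perfect matching are both correct and complete, and you rightly note this direction needs neither non-bipartiteness nor $k\ge(\nu+2)/4$.

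The genuine gap is the entire core of the converse. Your Tutte-type reformulation is sound: if $G$ is $k$-extendable but not $2k$-factor-critical, there is a set $R_0$ with $|R_0|\ge 2k$ and $o(G-R_0)\ge|R_0|-2k+2$, no such set can contain a $k$-matching, and the counting $2k\le|R_0|\le 3k-2$, $\nu-|R_0|\le 2k-2$ and the exposed independent set $I$ with $|I|\ge|R_0|-2k+2$ all check out. But at the point where a contradiction must be extracted, you write that you ``expect'' the obstacle to lie in a certain place and describe an ``intended mechanism'' of relocating vertices or routing an odd cycle through the $R_0$-to-component adjacencies --- without carrying out any such construction or verifying it produces a barrier containing a $k$-matching. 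This is precisely the step where both hypotheses must do their work, and it is the only nontrivial part of this direction; as written, nothing is proved. A concrete way to close it (and, to my recollection, close to what \cite{ZhaWanLou2007} actually does) is to feed the configuration into the Maschlanka--Volkmann bound $\alpha(G)\le\nu/2-k$ (Lemma~\ref{lemma:mv1:ext_ind}): the set $I$ together with suitably chosen vertices of the many odd components of $G-R_0$ (there are at least $|R_0|-2k+2$ of them squeezed into at most $2k-2$ vertices, so most are singletons) threatens to produce an independent set that is too large, and it is there that an edge inside a would-be independent set yields the forbidden $k$-matching in a barrier. Until some argument of this kind is written down and checked, the lemma is not proved.
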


\begin{lemma} \label{lemma:mv1:ext_ind}
(Maschlanka and Volkmann \cite{MasVol1996}) If $G$ is a
$k$-extendable non-bipartite graph, then $\alpha(G)\leq
\nu(G)/2-k$.
\end{lemma}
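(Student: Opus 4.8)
The plan is to reduce the whole statement to a single matching-existence claim about the complement of a maximum independent set. Fix a maximum independent set $I$ of $G$, write $\alpha=|I|=\alpha(G)$ and $W=V(G)\setminus I$, so $|W|=\nu(G)-\alpha$. Since $G$ is $k$-extendable it is in particular $1$-extendable and so has a perfect matching; as $I$ is independent, no matching edge joins two vertices of $I$, so in every perfect matching $M$ the vertices of $I$ are matched bijectively into $W$, whence exactly $\alpha$ edges of $M$ join $I$ to $W$ and exactly $\nu(G)/2-\alpha$ edges of $M$ lie inside $W$. The key reduction is: if $G[W]$ contains a matching $M_0$ of size $k$, then all $2k$ of its endpoints lie in $W$, and extending $M_0$ to a perfect matching $M$ by $k$-extendability forces the $\alpha$ vertices of $I$ to be matched into $W\setminus V(M_0)$, a set of only $|W|-2k$ vertices; hence $\alpha\le |W|-2k=\nu(G)-\alpha-2k$, i.e. $\alpha\le\nu(G)/2-k$. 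So it suffices to prove that $G[W]$ has a matching of size at least $k$.

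To prove this I would argue by contradiction, assuming the matching number of $G[W]$ is $m\le k-1$. Let $M_W$ be a maximum matching of $G[W]$ and let $U\subseteq W$ be its unsaturated vertices; then $U$ is independent in $G$, so $|U|\le\alpha$, which gives $|W|-2m\le\alpha$ and hence $m\ge\nu(G)/2-\alpha$. If $m\ge k$ we could take a size-$k$ submatching inside $W$ and finish as above, so we may assume the tight situation $m=\nu(G)/2-\alpha\le k-1$. In that case $|U|=\alpha$, so $U$ is a second maximum independent set disjoint from $I$; the set $Q=W\setminus U$ has $|Q|=2m$, and since the edges of $M_W$ saturate $Q$ and avoid $U$ they form a perfect matching of $G[Q]$. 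Moreover $G-Q=G[I\cup U]$ has no edges inside $I$ or inside $U$, hence is bipartite with parts $I$ and $U$.

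Now non-bipartiteness enters: since $G[I\cup U]$ is bipartite, every odd cycle of the non-bipartite graph $G$ must meet $Q$. The plan is to use such an odd cycle to build a matching $M_0$ of size exactly $k$ whose deletion destroys every perfect matching, contradicting $k$-extendability: one wants to saturate $Q$ in a way that \emph{wastes} an odd vertex along the odd cycle (rather than using the even perfect matching of $Q$), so that $G-V(M_0)$ acquires an odd component or a parity defect on the bipartite part $I\cup U$ and the Tutte condition of Lemma~\ref{lemma:YF:k-fc} fails. I expect this last step to be the main obstacle, because in the tight case the vertex count between $I$ and $W$ is exactly balanced, so the contradiction cannot come from counting alone and must be extracted from the odd structure carried by $Q$; the bookkeeping needed to guarantee that the odd cycle produces an odd imbalance genuinely disjoint from the chosen matching is delicate (the degenerate case $Q=\varnothing$, which forces $G$ bipartite, and the case $k=1$ serve as sanity checks). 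An alternative route is induction on $k$ using the standard fact that $G-V(e)$ is $(k-1)$-extendable for every edge $e$: choosing $e$ with both endpoints in $W$ keeps $I$ independent and lowers the bound by exactly the right amount, and there the obstacle becomes ensuring $e$ can be chosen so that $G-V(e)$ stays non-bipartite, which fails precisely at the extremal configurations (for instance $K_4$, where equality holds) and would have to be handled separately.
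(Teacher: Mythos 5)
This lemma is quoted by the paper from Maschlanka and Volkmann without proof, so there is no in-paper argument to compare against; your attempt has to stand on its own, and as written it does not. Your opening reduction is correct: if $G[W]$ contains a matching $M_0$ of size $k$ (where $W=V(G)\setminus I$ for a maximum independent set $I$), then extending $M_0$ to a perfect matching forces $I$ to embed into $W\setminus V(M_0)$ and yields $\alpha\le\nu/2-k$. But note that this reduction is actually an \emph{equivalence}: once the lemma is known, any perfect matching restricted to $W$ already gives $\nu/2-\alpha\ge k$ independent edges in $G[W]$. So ``$G[W]$ has a $k$-matching'' is exactly as hard as the lemma, and everything hinges on the part you did not finish.

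Two concrete gaps remain. First, you assert ``we may assume the tight situation $m=\nu(G)/2-\alpha$,'' but your argument only gives $m\ge\nu/2-\alpha$ together with $m\le k-1$; the tightness does not follow from these. (It \emph{can} be justified: since $G$ is $m$-extendable for $m\le k-1$, extend the maximum matching $M_W$ of $G[W]$ to a perfect matching $M$ of $G$; every $M_W$-unsaturated vertex of $W$ must then be $M$-matched into $I$, because an edge of $M$ inside $U$ would contradict maximality of $M_W$, and this forces $|U|=\alpha$ and $m=\nu/2-\alpha$. You would need to supply this step.) Second, and decisively, the contradiction you need in the tight configuration --- that the odd structure carried by $Q=V(M_W)$ is incompatible with $k$-extendability --- is exactly the content of the Maschlanka--Volkmann theorem, and you explicitly leave it open (``I expect this last step to be the main obstacle''). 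As your own small examples show, extending $M_W\cup F$ for a matching $F$ between $I$ and $U$ produces no parity violation by counting alone, so a genuinely new idea is required there; the fallback induction via $G-V(e)$ likewise founders on the unhandled case where $G-V(e)$ becomes bipartite. The proof is therefore incomplete at its essential step.
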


A fullerene graph is a planar cubic $3$-connected graph with only
pentagonal and hexagonal faces. A fullerene graph on $n$ vertices
exists for even $n\geq 20$, except $n=22$.

\begin{lemma} \label{lemma:zz1:ful_ext}
(Zhang and Zhang \cite{ZhaZha2001}) Every fullerene graph is
$2$-extendable.
\end{lemma}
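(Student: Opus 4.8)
The plan is to prove the statement directly: take an arbitrary matching $M=\{u_1v_1,u_2v_2\}$ of size $2$ in a fullerene $G$ and show it lies in a perfect matching, equivalently that $H:=G-\{u_1,v_1,u_2,v_2\}$ has a perfect matching. Throughout I would use the standing structural facts about a fullerene $G$: it is cubic, $3$-connected (hence $3$-edge-connected), planar of girth $5$, it has an even number of vertices (since $|E(G)|=3\nu(G)/2$), and by Euler's formula it has exactly $12$ pentagonal faces, the remaining faces being hexagons; I would also use that $G$ is cyclically $5$-edge-connected, a known structural property of fullerenes. Suppose for contradiction that $H$ has no perfect matching. As $\nu(H)$ is even, Tutte's theorem gives $S\subseteq V(H)$ with $o(H-S)\ge |S|+2$. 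Setting $T:=S\cup\{u_1,v_1,u_2,v_2\}$ produces a set with $V(M)\subseteq T$, with at least the two edges of $M$ inside it (so $e(T)\ge 2$), and with $o(G-T)\ge |T|-2$.

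First I would pin down, by edge counting, the possible shape of such a barrier $T$. Since $G$ is cubic the number of edges leaving $T$ is $3|T|-2e(T)$. Since $G$ is $3$-edge-connected every component of $G-T$ sends at least $3$ edges to $T$, and a parity argument shows each odd component sends an odd number. A singleton odd component sends exactly $3$; any larger odd component sends at least $5$ (a tree component of order $\ge 3$ already needs $5$, while a component carrying a cycle cannot be separated by only $3$ edges, by girth $5$ and cyclic $5$-edge-connectivity); each even component sends at least $4$. Writing $s,b,d$ for the numbers of singleton odd, larger odd and even components and using $o(G-T)=s+b\ge |T|-2$, the inequality $3|T|-2e(T)=\sum(\text{edges to }T)\ge 3s+5b+4d$ collapses to $e(T)\le 3-b-2d$. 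With $e(T)\ge 2$ this forces $b+2d\le 1$, leaving exactly two possibilities: either (Case 1) $G-T$ is an independent set of $|T|-2$ vertices with $e(T)=3$, or (Case 2) $G-T$ consists of one large odd component $C$ sending exactly $5$ edges to $T$ together with singletons, and $e(T)=2$; in Case 2 the order and girth constraints force $C$ to be very small (a $P_3$, a pentagon, or two pentagons glued along a short path).

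The contradiction I would extract is that each such barrier makes $G$ ``too close to bipartite''. The governing fact is that deleting edges so as to destroy all odd cycles of $G$ requires at least $6$ deletions: in the planar dual the deleted set must be a $T$-join whose odd-degree vertices are the $12$ pentagonal faces, and a set with $12$ odd-degree vertices has at least $12/2=6$ edges; equivalently each deleted edge lies on at most two faces and so can repair the parity of at most two of the $12$ odd faces. On the other hand each barrier above yields a bipartition of $V(G)$ with fewer than $6$ monochromatic edges: in Case 1 the bipartition $(T,V(G)\setminus T)$ has only the $e(T)=3$ edges inside $T$; in Case 2, placing $T$ and the singletons on opposite sides and $2$-colouring the small component $C$ by a maximum cut (for a pentagon this leaves a single frustrated edge, and at most $2$ of its $5$ edges to $T$ need go to the $T$-side) leaves at most $e(T)+1+2=5$ monochromatic edges. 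Deleting the monochromatic edges makes $G$ bipartite, so in every case $G$ is made bipartite by deleting at most $5$ edges, contradicting the bound $6$. Hence $H$ has a perfect matching and $G$ is $2$-extendable.

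The main obstacle is the last step in Case 2, where $G-T$ contains a genuine odd cycle (e.g. an isolated pentagonal component). Deleting all internal edges of $C$ is too expensive (a pentagon alone contributes $5$), so the threshold $6$ is not beaten by brute force; the point is to choose a maximum cut inside $C$ so that only the unavoidably frustrated edges, plus a minimal share of the edges from $C$ to $T$, are monochromatic. Verifying for every extremal component $C$ permitted by the counting that this optimised bipartition has fewer than $6$ monochromatic edges is the delicate part, and it is exactly here that the defining face structure of a fullerene (precisely twelve odd faces, each edge on at most two faces) is used.
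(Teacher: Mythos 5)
The paper offers no proof of this lemma at all---it is imported verbatim from Zhang and Zhang \cite{ZhaZha2001}---so there is nothing in the text to compare your argument against; it has to stand on its own. Its skeleton is sound and close to the standard modern treatments: Tutte's theorem applied to $G-V(M)$, an edge count against the barrier $T\supseteq V(M)$ using $3$-regularity, $3$-edge-connectivity and parity (your inequality $e(T)\le 3-b-2d$ checks out), and the obstruction that a fullerene needs at least $12/2=6$ edge deletions to become bipartite because each edge lies on at most two of the twelve pentagonal faces. Case 1 genuinely goes through: a barrier with $b=d=0$ yields a $2$-colouring of $V(G)$ with at most $e(T)\le 3$ monochromatic edges, contradicting the bound $6$. (A small loose end there: you should say why $e(T)=2$ with all components singletons is excluded, e.g.\ $3|T|-2e(T)=3s$ forces $3\mid e(T)$---though that subcase would contradict the bound $6$ even more cheaply, so nothing is lost.)

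The genuine gap is in Case 2, and you flag it yourself without closing it. The counting only gives that the single large odd component $C$ satisfies $e(C)=(3|C|-5)/2$ and sends exactly $5$ edges to $T$; it does not give $|C|\in\{3,5,7\}$. The claim that ``order and girth constraints force $C$ to be very small'' does not follow from girth and the edge count alone (for instance $|C|=9$, $e(C)=11$, maximum degree $3$ and girth $5$ is realizable as an abstract graph), and it cannot follow from cyclic $5$-edge-connectivity either, since the five edges leaving $C$ form a cyclic $5$-edge-cut and some fullerenes (nanotube-like ones) possess non-trivial cyclic $5$-edge-cuts whose sides are large caps containing six pentagons each. For such a $C$ the decisive step---exhibiting a $2$-colouring of $C$ in which the monochromatic edges inside $C$ plus the monochromatic edges from $C$ to $T$ number at most $3$---is exactly what must be proved and is not obviously true when $C$ contains several pentagonal faces. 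One way to repair this is to invoke the classification of cyclic $5$-edge-cuts in fullerene graphs (Kardo\v{s} and \v{S}krekovski) and then observe that $G-C=T\cup\{\mbox{singletons}\}$ becomes bipartite after deleting only the two matching edges, so it cannot carry six pentagonal faces, whence the cut is trivial and $C$ is a pentagonal face or a $P_3$; but that theorem is a substantial input you neither cite nor prove. As written, the argument is incomplete at precisely the point you call ``the delicate part.''
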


\section{Minimum size of $n$-factor-critical graphs and $k$-extendable bipartite graphs} \label{section:Harary}
In this section, we determine the minimum size of
$n$-factor-critical graphs and $k$-extendable bipartite graphs.

Let $G$ be a graph on $\nu$ vertices. We use numbers $0$, $1$,
$\ldots$, $\nu-1$ to label the $\nu$ vertices of a graph $G$. We
call a vertex labelled by an even (odd) number an \emph{even (odd)
vertex}. We denote an edge of $G$ jointing $i$ to $j$ by $(i,j)$,
and a path or cycle in $G$ by the listing all vertices on it in
order. Throughout this section, the labels of the vertices are
reduced modulo $\nu$.

In \cite{Har1969}, Harary defined Harary graphs, which are
$m$-connected graphs on $\nu$ vertices with $\lceil m\nu/2 \rceil$
edges, for $2\leq m<\nu$. A \emph{Harary graph} $H_{m,\nu}$ with
vertex set $\{0,1,\ldots, \nu-1\}$ is defined as follows.

(1) $m=2r$ is even. Two vertices $i$ and $j$ are joined if
$i-r\leq j \leq i+r$.

(2) $m=2r+1$ is odd and $\nu$ is even. $H_{2r+1,\nu}$ is
constructed from $H_{2r,\nu}$ by adding the edges $(i, i+\nu/2)$,
for all $0\leq i \leq \nu/2-1$.

(3) Both $m=2r+1$ and $\nu$ are odd. $H_{2r+1,\nu}$ is constructed
from $H_{2r,\nu}$ by adding the edges $(0,(\nu-1)/2)$,
$(0,(\nu+1)/2)$ and $(i,i+(\nu+1)/2)$ for all $1\leq i
<(\nu-1)/2$.

Firstly, We will prove some results on the factor-criticality and
extendibility of Harary graphs. We present some useful notations
and definitions below. We denote by $C$ the Hamilton cycle
$(0,1,\ldots,\nu-1,0)$ in $H_{m,\nu}$. For a vertex set $S\subset
V(G)$, we define an \emph{$S$-segment} to be the maximal segment
$P$ of $C$ such that all internal vertices of $P$ belong to $S$,
while the endvertices of $P$ belong to $V(G)\backslash S$. We say
that a component of $G-S$, containing an endvertex of $P$, be
\emph{associated} with $P$. An $S$-segment $P=(i,i+1, \ldots, j)$
is called an \emph{$S$-link} if the vertices $i$ and $j$ belong to
different components of $G-S$.

\begin{theorem} \label{theorem:m even}
Let $r\geq 2$ and $\nu>2r$ be two integers. Then $H_{2r,\nu}$ is
$(2r-1)$-factor-critical if $\nu$ is odd and
$(2r-2)$-factor-critical if $\nu$ is even.
\end{theorem}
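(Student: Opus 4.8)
The plan is to invoke Lemma~\ref{lemma:YF:k-fc}. Writing $n=2r-1$ when $\nu$ is odd and $n=2r-2$ when $\nu$ is even, the parity condition $\nu\equiv n\pmod 2$ holds in both cases, and it is precisely this requirement that forces the value to drop from $2r-1$ to $2r-2$ once $\nu$ is even. The real content is to verify that $o(H_{2r,\nu}-S)\le |S|-n$ for every $S\subseteq V(H_{2r,\nu})$ with $|S|\ge n$. I would first record the elementary observation that $i$ and $j$ are adjacent in $H_{2r,\nu}$ exactly when their cyclic distance is at most $r$.

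The main geometric step is the bound $|S|\ge c\,r$, where $c$ denotes the number of components of $H_{2r,\nu}-S$, which I would establish through the $S$-link machinery introduced above. The two endvertices of an $S$-link lie in distinct components, hence are non-adjacent, so their cyclic distance exceeds $r$ and the corresponding $S$-segment contains at least $r$ internal vertices, all lying in $S$. Now suppose $c\ge 2$. Traversing the Hamilton cycle $C$ and recording the component of each successive non-$S$ vertex produces a cyclic sequence taking $c\ge 2$ distinct values, which therefore changes at least $c$ times; each change occurs across a distinct $S$-link. Since the internal vertices of distinct $S$-segments are disjoint, this yields at least $c$ pairwise disjoint $S$-links, each contributing at least $r$ vertices to $S$, whence $|S|\ge cr$.

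The remainder is a parity-and-arithmetic argument that closes uniformly in both cases. Because every non-$S$ vertex lies in some component, $o(H_{2r,\nu}-S)\equiv \nu-|S|\equiv n-|S|\equiv |S|-n\pmod 2$. Assume for contradiction that $o(H_{2r,\nu}-S)>|S|-n$; the congruence upgrades this to $o(H_{2r,\nu}-S)\ge |S|-n+2$, and since $|S|\ge n$ this gives $c\ge o(H_{2r,\nu}-S)\ge 2$. Applying $|S|\ge cr$ yields $|S|\ge (|S|-n+2)r$, i.e. $|S|(r-1)\le r(n-2)$, so $|S|\le \frac{r(n-2)}{r-1}$. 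Since $r\ge 2$, the inequality $\frac{r(n-2)}{r-1}<n$ reduces to $n<2r$, which holds for $n=2r-1$ and for $n=2r-2$. Thus $|S|<n$, contradicting $|S|\ge n$, and so $o(H_{2r,\nu}-S)\le |S|-n$ throughout; Lemma~\ref{lemma:YF:k-fc} then gives the asserted factor-criticality.

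I expect the main obstacle to be the rigorous justification of $|S|\ge cr$, specifically arguing that each component of $H_{2r,\nu}-S$ is exactly a maximal run of consecutive non-$S$ vertices whose successive members sit within cyclic distance $r$ (so consecutive non-$S$ vertices within a cluster are adjacent and no component can be glued across a gap of $r$ consecutive deleted vertices), together with ruling out the degenerate case in which an $S$-segment wraps around almost the whole cycle; the latter is excluded precisely because its endvertices would then be adjacent the short way round, contradicting that the segment is an $S$-link. Once $|S|\ge cr$ is secured for $c\ge 2$, the parity step and the inequality $n<2r$ finish the proof with no further case analysis.
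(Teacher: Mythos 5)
Your proposal is correct and follows essentially the same route as the paper: reduce to Lemma~\ref{lemma:YF:k-fc}, use parity to upgrade $o(G-S)>|S|-n$ to $o(G-S)\ge|S|-n+2$, count at least $c$ $S$-links each contributing $r$ vertices to $S$ to get $|S|\ge cr$, and derive an arithmetic contradiction. The only (harmless) difference is that you close the argument uniformly for all $c\ge 2$ via the inequality $|S|\le r(n-2)/(r-1)<n$, whereas the paper disposes of $c=2$ separately using the $2r$-connectivity of $H_{2r,\nu}$ and then contradicts $(c-2)(1-r)\ge 1$ for $c\ge 3$.
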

\begin{proof}
Suppose $\nu=2s+1$ is odd, and $G=H_{2r,2s+1}$ is not
$(2r-1)$-factor-critical. By Lemma \ref{lemma:YF:k-fc}, there
exists a vertex set $S\subset V(G)$ with $|S|\geq 2r-1$, such that
$o(G-S)> |S| -(2r-1)$. By parity, $o(G-S)\geq
|S|-(2r-1)+2=|S|-2r+3 $. Let $c$ be the number of components of
$G-S$. Then $c \geq o(G-S)\geq |S|- 2r+3 \geq 2$. If $c=2$ then
all equalities must hold and $|S|=2r-1$. But this is impossible
since $G$ is $2r$-connected. So, $c\geq 3$.

By the definition of $H_{2r,2s+1}$, every $S$-link $P$ contains at
least $r$ internal vertices. Since $G-S$ has at least two
components, a component of $G-S$ must be associated with at least
two $S$-links. Hence there are at least $2c/2=c$ $S$-links.
Therefore $|S|\geq cr$. So we have
$$c\geq |S|-2r+3 \geq cr-2r+3.$$ That is, $(c-2)(1-r)\geq 1$.
However, this is impossible since $c\geq 3$ and $r\geq 2$. Hence,
$G$ must be $(2r-1)$-factor-critical.

Suppose that $\nu=2s$ is even, and $G=H_{2r,2s}$ is not
$(2r-2)$-factor-critical. By Lemma \ref{lemma:YF:k-fc}, there
exists a vertex set $S\subset V(G)$ with $|S|\geq 2r-2$, such that
$o(G-S)> |S| -(2r-2)$. By parity, $o(G-S)\geq |S|-2r+4 $. Using
the same notations and analogous analysis in the case that $\nu$
is odd, we have $c\geq 3$ and $(c-2)(1-r) \geq 2$, which are
impossible.
\end{proof}

\begin{theorem} \label{thoerem:m odd}
Let $r\geq 2$ and $\nu>2r+1$ be two integers. Then $H_{2r+1,\nu}$
is $2r$-factor-critical if $\nu$ is even and
$(2r-1)$-factor-critical if $\nu$ is odd.
\end{theorem}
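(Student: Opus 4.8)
The plan is to follow the proof of Theorem~\ref{theorem:m even} almost line for line, the only structural difference being that $H_{2r+1,\nu}$ is $(2r+1)$-connected rather than $(2r)$-connected, which is exactly what upgrades the conclusion. I would argue by contradiction through the Tutte-type characterization of Lemma~\ref{lemma:YF:k-fc}. Suppose first that $\nu$ is even and $G=H_{2r+1,\nu}$ fails to be $2r$-factor-critical; the parity hypothesis $\nu\equiv 2r\ (\mathrm{mod}\ 2)$ holds, so there is a set $S$ with $|S|\ge 2r$ and $o(G-S)>|S|-2r$, and parity forces the jump $o(G-S)\ge |S|-2r+2$. Writing $c$ for the number of components of $G-S$, this gives $c\ge o(G-S)\ge 2$. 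The case $\nu$ odd, target $(2r-1)$, runs identically, the parity jump now reading $o(G-S)\ge |S|-2r+3$.

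The engine is the same $S$-link count: there are at least $c$ $S$-links, and the claim I would single out as the point needing care is that \emph{each $S$-link still contains at least $r$ internal vertices}, giving $|S|\ge cr$. This is the step where the extra odd-$m$ edges could in principle interfere, namely the diameter edges $(i,i+\nu/2)$ when $\nu$ is even, or the chords $(0,(\nu-1)/2)$, $(0,(\nu+1)/2)$ and $(i,i+(\nu+1)/2)$ when $\nu$ is odd; one must check they cannot manufacture a short $S$-link. The resolution is that they cannot, and in the favourable direction: if an $S$-link had fewer than $r$ internal vertices, its endvertices would be at cyclic distance at most $r$ and hence already adjacent in $H_{2r,\nu}$, therefore adjacent in the supergraph $H_{2r+1,\nu}$, and so in a common component of $G-S$ — contradicting the definition of an $S$-link. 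Thus the estimate $|S|\ge cr$ survives unchanged; the additional edges only help.

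Finally I would close both cases arithmetically, invoking $(2r+1)$-connectivity to eliminate $c=2$. If $c=2$ the inequalities collapse to $|S|\le 2r$ in the even case and $|S|\le 2r-1$ in the odd case, each contradicting the fact that a vertex cut of $H_{2r+1,\nu}$ has size at least $2r+1$; hence $c\ge 3$. Substituting $|S|\ge cr$ into $c\ge |S|-2r+2$ (resp. $c\ge |S|-2r+3$) yields $(c-2)(r-1)\le 0$ in the even case and $(c-2)(r-1)\le -1$ in the odd case, both impossible for $c\ge 3$ and $r\ge 2$. The only genuine novelty relative to Theorem~\ref{theorem:m even} is bookkeeping the extra $+1$ in the connectivity, which is precisely what raises the criticality from $(2r-1)$ and $(2r-2)$ to $2r$ and $(2r-1)$; beyond verifying the $S$-link length claim I do not anticipate any real obstacle.
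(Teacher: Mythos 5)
Your proof is correct and is exactly the argument the paper intends: its own "proof" of this theorem is just the remark that it is similar to Theorem~\ref{theorem:m even}, and you have carried out that adaptation faithfully — the parity jump, the count of at least $c$ $S$-links each with at least $r$ internal vertices (correctly noting that the extra diameter/chord edges cannot shorten an $S$-link, since endvertices at cyclic distance at most $r$ are already adjacent in $H_{2r,\nu}$), and the use of $(2r+1)$-connectivity to rule out $c=2$. No gaps.
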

\begin{proof}
The proof is similar to that of Theorem \ref{theorem:m even}.
\end{proof}

\begin{theorem} \label{theorem:m=2,3}
Let $s\geq 2$ be an integer. Then $H_{2,2s-1}$ and $H_{3,2s+1}$
are factor-critical. $H_{2,2s}$ is 1-extendable. $H_{3,2s}$ is
bicritical if $s$ is even, and $2$-extendable is $s$ is odd.
\end{theorem}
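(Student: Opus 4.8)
The four assertions split according to $m=2$ and $m=3$. For $m=2$ the graph $H_{2,\nu}$ is just the Hamilton cycle $C_\nu$, and the plan is to argue directly: an odd cycle $C_{2s-1}$ becomes an even path after deleting any vertex, so it is factor-critical, while an even cycle $C_{2s}$ has exactly two perfect matchings whose union is all of $E(C_{2s})$, so every single edge lies in a perfect matching and $H_{2,2s}$ is $1$-extendable. For $H_{3,2s+1}$ I would note that, by its construction, it contains the spanning odd cycle $C_{2s+1}=H_{2,2s+1}$; since adding edges to a factor-critical graph keeps it factor-critical (any perfect matching of $G-v$ survives in the supergraph) and $H_{3,2s+1}\supseteq C_{2s+1}$, the graph $H_{3,2s+1}$ is factor-critical. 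These three cases are routine.

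The substantive case is $H_{3,2s}$, for which I would first record three structural facts: (i) $H_{3,2s}$ is the circulant graph with connection set $\{1,-1,s\}$, hence vertex-transitive, with the antipodal map $x\mapsto -x$ as an automorphism; (ii) the spokes $\{(i,i+s):0\le i\le s-1\}$ form a perfect matching; and (iii) a spoke $(i,i+s)$ joins two vertices of equal parity iff $s$ is even, so $H_{3,2s}$ is non-bipartite when $s$ is even and bipartite (with parts the even and the odd vertices) when $s$ is odd. This dichotomy matches the two conclusions to be proved.

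For $s$ even I want bicriticality, i.e.\ (by definition) that $G-S$ has a perfect matching for every $S$ with $|S|=2$. Using vertex-transitivity and the antipodal automorphism I may assume $S=\{0,j\}$ with $1\le j\le s$; deleting $0$ and $j$ from the cycle leaves two arcs $A=\{1,\dots,j-1\}$ and $B=\{j+1,\dots,2s-1\}$. When $j$ is odd both arcs have even length and I match each arc along the cycle. When $j$ is even both arcs are odd, and the plan is to spend the single spoke $(1,1+s)$, which joins $A$ to $B$; after this, $A\setminus\{1\}$ and the two pieces of $B\setminus\{1+s\}$ all have even length, using that $s$ and $j$ are both even, and I match along the cycle. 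This produces the required perfect matching in every case.

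For $s$ odd the graph is bipartite and I would invoke Plummer's criterion, Lemma \ref{lemma:Plu:k-ext}: with $|U|=|W|=s$ it remains to show $|N(X)|\ge|X|+2$ for every nonempty $X\subseteq U$ with $|X|\le s-2$. Writing $U,W$ as $\mathbb{Z}_s$ via $u_a=2a$ and $w_b=2b+1$, one finds $u_a\sim w_{a-1},w_a,w_{a+t}$ with $t=(s-1)/2$, so $N(X)=(X-1)\cup X\cup(X+t)$ inside $\mathbb{Z}_s$. If $X$ meets $\mathbb{Z}_s$ in $\beta\ge 2$ cyclic blocks then already $|X\cup(X-1)|=|X|+\beta\ge|X|+2$. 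The remaining case $\beta=1$ (a single arc) is the crux: then $X\cup(X-1)$ is an arc of size $|X|+1$ and $X+t$ is an arc of size $|X|$, and a size-$|X|$ arc contained in a size-$(|X|+1)$ arc must be its leftmost or rightmost sub-arc; since $t\not\equiv 0,-1 \pmod{s}$ for $s\ge 3$, the arc $X+t$ is neither, so it contributes a vertex outside $X\cup(X-1)$ and $|N(X)|\ge|X|+2$. The main obstacle is exactly this single-block subcase: the cycle edges by themselves give only $+1$, so the second unit of expansion must be extracted from the spoke shift by $t$, and making that rigorous needs the arc-comparison argument above rather than a crude count.
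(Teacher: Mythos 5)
Your proof is correct and follows the same route as the paper's: the cycle and spanning-subgraph observations for the first three claims, a direct verification of bicriticality for even $s$, and Plummer's bipartite criterion (Lemma \ref{lemma:Plu:k-ext}) for odd $s$. The paper's own proof dispatches both $H_{3,2s}$ cases with one-line assertions (``we can verify by definition'' and ``by considering the neighborhood of $U$ on $C$''); your antipodal/parity case analysis for $|S|=2$ and your single-block arc argument --- which correctly identifies that the cycle edges alone yield only $|X|+1$ and that the spoke shift by $t=(s-1)/2$ must supply the second unit of expansion --- furnish exactly the details the paper omits.
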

\begin{proof}
Since $H_{2,\nu}$ is a cycle of order $\nu$, $H_{2,2s-1}$ is
factor-critical and $H_{2,2s}$ is $1$-extendable. Since
$H_{2,2s+1}$ is a spanning subgraph of $H_{3,2s+1}$, $H_{3,2s+1}$
is factor-critical.


Consider $H_{3,2s}$. If $s$ is even, we can verify by definition
that $H_{3,2s}$ is bicritical. If $s$ is odd, $H_{3,2s}$ is a
bipartite graph with two parts consisting of the even vertices
 and odd vertices respectively. Denote the parts of even
vertices by $U_e$, and that of odd vertices by $U_o$.

Suppose that $H_{3,2s}$ is not $2$-extendable. Then by Lemma
\ref{lemma:Plu:k-ext}, there exist a vertex set $U \subset U_e$
with $|U|\leq s-2$, such that $|N(U)| < |U|+2$. By considering the
neighborhood of $U$ on $C$, we can see that this is impossible.
\end{proof}

By Lemma \ref{lemma:Fav:nfc-con}, if $G$ is $n$-factor-critical,
then $\delta(G)\geq n+1$. So, an $n$-factor-critical graph $G$ on
$\nu$ vertices has at least $\nu(n+1)/2$ edges. Note that
$\nu(n+1)/2$ is an integer, since $n$ and $\nu$ have the same
parity. For two odd integers $n=2r-1\geq 3$ and $\nu>n$, by
Theorem \ref{theorem:m even}, $H_{n+1,\nu}$ is an
$n$-factor-critical graph on $\nu$ vertices with $\nu(n+1)/2$
edges. For two even integers $n=2r\geq 4$ and $\nu>n$, by Theorem
\ref{thoerem:m odd}, $H_{n+1,\nu}$ is an $n$-factor-critical graph
on $\nu$ vertices with $\nu(n+1)/2$ edges. For $n=1$ and an odd
integer $\nu
>1$, by Theorem \ref{theorem:m=2,3}, $H_{2,\nu}$ is a
factor-critical graph with $\nu$ edges. For $n=2$ and an even
integer $\nu>2$, it is not hard to check that a \emph{wheel}
$W_\nu$, which is formed by connecting a single vertex to all
vertices of a cycle of length $\nu-1$, is a bicritical graph on
$\nu$ vertices, with $ 3\nu/2$ edges. Therefore, the minimum size
of an $n$-factor-critical graph on $\nu$ vertices is exactly
$\nu(n+1)/2$, for all integers $n\geq 1$ and $\nu >n$, where $n$
and $\nu$ has the same parity. $\newline$

Modifying the construction of Harary graphs slightly, we can get a
class of $k$-extendable bipartite graphs with minimum size.

Let $2\leq m \leq s$ be integers, $H^B_{m,2s}$ with vertex set
$\{0,1,\ldots,2s-1\}$, is defined as follows.

(1) $m=2r$, where $r\leq s/2$. Then vertex $i$ is adjacent to $j$,
if $i$ is even, $j$ is odd and $i-2r+1\leq j \leq i+2r-1$.

(2) $m=2r+1$, where $r\leq s/2$. Then vertex $i$ is adjacent to
$j$, if $i$ is even, $j$ is odd and $i-2r+1 \leq j \leq i+2r+1$.

It is clear that every $H^B_{m,2s}$ is a balanced bipartite graph
whose two parts are consisting of the even vertices and the odd
vertices, respectively. We denote the part consisting of the even
(odd) vertices by $V_e$ ($V_o$).
\begin{theorem} \label{theorem:HB m even}
For two integers $r\geq 1$ and $s\geq 2r$, $H^B_{2r,2s}$ is
$(2r-1)$-extendable.
\end{theorem}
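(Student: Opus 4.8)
The plan is to apply Plummer's characterization of $k$-extendable bipartite graphs (Lemma~\ref{lemma:Plu:k-ext}) directly. Setting $k=2r-1$, I must verify two things for $G=H^B_{2r,2s}$: that the bipartition is balanced, and that for every non-empty $X\subseteq V_e$ with $|X|\leq s-(2r-1)=s-2r+1$, we have $|N(X)|\geq |X|+(2r-1)$. Balance is immediate since $|V_e|=|V_o|=s$ by construction. Note also that the connectivity and size requirements are met: each even vertex has exactly $2r$ odd neighbors ($j$ ranging over the odd values in $[i-2r+1,i+2r-1]$), so $\delta(G)=2r$, consistent with a $(2r-1)$-extendable graph, and the constraint $s\geq 2r$ guarantees $k=2r-1\leq(\nu-2)/2=s-1$.

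The heart of the argument is the neighborhood expansion inequality. First I would reduce to the case where $X$ is a set of consecutive even vertices. The key structural observation is that the neighborhood of a single even vertex $i$ is the block of odd vertices $\{i-2r+1,i-2r+3,\ldots,i+2r-1\}$, a ``window'' of $2r$ consecutive odd labels centered at $i$. For a set $X$ of even vertices, $N(X)$ is the union of these windows. A union of intervals achieves its minimum cardinality (relative to $|X|$) when the windows overlap as much as possible, i.e. when the even vertices of $X$ are packed consecutively around the cycle; so it suffices to bound $|N(X)|$ from below in the worst case of $|X|=t$ consecutive even vertices $\{a,a+2,\ldots,a+2(t-1)\}$.

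For such a consecutive block, the neighborhood is the set of odd labels in the range $[a-2r+1,\ a+2(t-1)+2r-1]$, which spans $t-1+2r$ consecutive odd values, giving $|N(X)|=t-1+2r=|X|+(2r-1)$ exactly, precisely meeting the required bound. The main obstacle is making the ``consecutive is worst'' reduction rigorous while respecting the modular (cyclic) labelling: I need to argue that wrapping around modulo $2s$ does not let the neighborhood shrink below this value, which requires the hypothesis $|X|\leq s-2r+1$ to prevent the two ends of the window-union from meeting and overcounting the overlap. Concretely, since the total span of odd labels is $s$, and the consecutive-block neighborhood occupies $|X|+2r-1\leq s$ odd labels, the windows cannot ``wrap past'' each other to cause a deficiency. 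I would formalize the consecutive-worst-case claim by an interchange/compression argument (moving an isolated even vertex of $X$ adjacent to the main block never increases $|N(X)|$), then conclude by the direct count above that $|N(X)|\geq|X|+2r-1$ holds for all admissible $X$, so $G$ is $(2r-1)$-extendable by Lemma~\ref{lemma:Plu:k-ext}.
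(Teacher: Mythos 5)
Your proposal is correct and takes essentially the same route as the paper: both apply Lemma~\ref{lemma:Plu:k-ext} and bound $|N(X)|$ by viewing each even vertex's neighborhood as a window of $2r$ consecutive odd labels, with the extremal case being consecutively packed vertices yielding exactly $|X|+2r-1$. The only difference is bookkeeping: where you propose a compression-to-worst-case argument, the paper partitions $U$ into maximal ``$U$-consecutive sets'' whose neighborhoods are pairwise disjoint and counts each one directly (ruling out the wrap-around case exactly as you do, via $|N(U)|\leq |U|+2r-2\leq s-1$), which achieves the same reduction without needing to formalize an interchange step.
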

\begin{proof}
Let $G=H^B_{2r,2s}$ and assume that $G$ is not
$(2r-1)$-extendable. By Lemma \ref{lemma:Plu:k-ext}, there exists
$U\subset V_e$, such that $|U|\leq s-(2r-1)$ and $|N(U)|\leq
|U|+(2r-2)$.

Define a \emph{$U$-consecutive set} $U^\prime=\{2i_0, 2i_1,\ldots,
2i_{t-1}\}$ as a maximal subset of $U$ so that $2i_l<2i_{l+1}\leq
2i_l+4r-2$, for $0\leq l\leq t-2$. Then the neighborhoods of
different $U$-consecutive sets do not intersect, and $U$ can be
uniquely divided into $U$-consecutive sets. For every
$U$-consecutive set $U^\prime=\{2i_0, 2i_1,\ldots, 2i_{t-1}\}$, if
$\{2i_0-2r+1,\ 2i_0-2r+3,\ \ldots, 2i_0-1\}\cap\{2i_{t-1}+1,\
2i_{t-1}+3,\ \ldots, 2i_{t-1}+2r-1\}\neq \emptyset$, then
$N(U^\prime)=U_o$, contradicting $|N(U)|\leq |U|+(2r-2)\leq s-1$.
Hence $\{2i_0-2r+1,\ 2i_0-2r+3,\ \ldots,
2i_0-1\}\cap\{2i_{t-1}+1,\ 2i_{t-1}+3,\ \ldots, 2i_{t-1}+2r-1\}=
\emptyset$. So $|N(U^\prime)|\geq r+r+t-1=|U^\prime|+2r-1$, and
hence $|N(U)|\geq |U|+2r-1$, contradicting $|N(U)|\leq
|U|+(2r-2)$. Therefore $G$ is $(2r-1)$-extendable.
\end{proof}

\begin{theorem} \label{theorem:HB m odd}
For two integers $r\geq 1$ and $s\geq 2r+1$, $H^B_{2r+1,2s}$ is
$(2r)$-extendable.
\end{theorem}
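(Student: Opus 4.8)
The plan is to mirror the proof of Theorem \ref{theorem:HB m even} almost verbatim, since the graph $H^B_{2r+1,2s}$ differs from $H^B_{2r,2s}$ only in that each even vertex $i$ gains one additional odd neighbor (the span of the adjacency window is extended from $[i-2r+1,i+2r-1]$ to $[i-2r+1,i+2r+1]$). First I would set $G=H^B_{2r+1,2s}$ and assume for contradiction that $G$ is not $(2r)$-extendable. By Lemma \ref{lemma:Plu:k-ext}, there must exist a nonempty $U\subset V_e$ with $|U|\leq s-2r$ and $|N(U)|\leq |U|+(2r-1)$. The whole argument then proceeds by bounding $|N(U)|$ from below and deriving a contradiction with this inequality.

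The key device, as before, is the notion of a \emph{$U$-consecutive set}. Here the defining gap condition must be adjusted to match the wider adjacency window: I would define $U'=\{2i_0,2i_1,\ldots,2i_{t-1}\}$ to be a maximal subset of $U$ with $2i_l<2i_{l+1}\leq 2i_l+4r+2$ for $0\leq l\leq t-2$. The point of this threshold is that two even vertices whose neighbor-windows overlap lie in the same $U$-consecutive set, so the neighborhoods of distinct $U$-consecutive sets are pairwise disjoint and $U$ decomposes uniquely into them. I would then count the odd vertices covered: for a single $U$-consecutive set $U'$, the neighborhood stretches from $2i_0-2r+1$ up to $2i_{t-1}+2r+1$, and counting the odd integers in this range (accounting for the $t-1$ internal gaps each contributing a fresh batch) gives $|N(U')|\geq (r+r+1)+(t-1)=|U'|+2r$, provided the two end-windows of $U'$ do not wrap around and collide.

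The one genuinely substantive step — as in the even case — is ruling out that wrap-around collision. If the ``left tail'' $\{2i_0-2r+1,\ldots,2i_0-1\}$ and the ``right tail'' $\{2i_{t-1}+1,\ldots,2i_{t-1}+2r+1\}$ intersect (modulo $2s$), then the odd neighbors of $U'$ would already exhaust all of $V_o$, giving $N(U')=V_o$ and hence $|N(U)|\geq |N(U')|=s$. Since the hypothesis forces $|N(U)|\leq |U|+(2r-1)\leq (s-2r)+(2r-1)=s-1<s$, this is a contradiction, so the tails must be disjoint and the clean count $|N(U')|\geq |U'|+2r$ applies. Summing over all $U$-consecutive sets (whose neighborhoods are disjoint) then yields $|N(U)|\geq |U|+2r$, directly contradicting $|N(U)|\leq |U|+(2r-1)$, and the proof is complete.

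The main obstacle I anticipate is purely bookkeeping: getting the parity and the off-by-one counts exactly right in the odd window $[i-2r+1,i+2r+1]$, which contains $2r+1$ odd integers rather than the $2r$ in the even case, and confirming that the threshold $4r+2$ in the $U$-consecutive definition correctly separates overlapping from non-overlapping windows. None of this requires a new idea beyond Theorem \ref{theorem:HB m even}; the structure of the contradiction and the role of Lemma \ref{lemma:Plu:k-ext} are identical.
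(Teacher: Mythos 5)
Your proposal is correct and is essentially the paper's own proof: the paper proves this theorem literally by the remark ``the proof is similar to that of Theorem \ref{theorem:HB m even},'' and you carry out exactly that adaptation, with the right adjustments (threshold $4r+2$ in the definition of a $U$-consecutive set, $2r+1$ odd neighbours per window giving $|N(U')|\geq |U'|+2r$, and the wrap-around case excluded because $|N(U)|\leq |U|+2r-1\leq s-1<s=|V_o|$). No further comment is needed.
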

\begin{proof}
The proof is similar to that of Theorem \ref{theorem:HB m even}.
\end{proof}

For a $k$-extendable graph $G$, $\delta(G)\geq k+1$. Hence, for an
integer $k\geq 1$ and an even integer $\nu\geq 2k+2$, a
$k$-extendable graph $G$ on $\nu$ vertices has at least $\nu
(k+1)/2$ edges. By Theorem \ref{theorem:HB m even} and
\ref{theorem:HB m odd}, for all $2\leq m \leq s$, $H^B_{m,2s}$ are
$(m-1)$-extendable bipartite graphs having $\nu (k+1)/2$ edges.
Therefore, the minimum size of a $k$-extendable bipartite graph on
$\nu$ vertices is exactly $\nu(k+1)/2$.

\section{Minimum size of $1$-extendable non-bipartite graphs and $2$-extendable non-bipartite graphs}
In the previous section we constructed $k$-extendable bipartite
graphs with minimum size. Now we consider $k$-extendable
non-bipartite graphs with minimum size. Let $G$ be a non-bipartite
graph on $\nu$ vertices, where $\nu$ even, and $k$ be an positive
integer such that $k\geq (\nu+2)/4$. By Lemma \ref{lemma:ZWL:eqv},
if $G$ is $k$-extendable, then it is $2k$-factor-critical.
Therefore, $\delta(G)\geq 2k+1$, and $G$ has at least
$(2k+1)\nu/2$ edges, which is greater than the lower bound for
$k$-extendable bipartite graphs. Hence, we raise the following
problem.

\begin{problem} \label{problem:non-bipartite}
Let $k\geq 1$ be an integer, and $G$ a $k$-extendable
non-bipartite graph on $\nu \geq 2k+2$ vertices. What is the
minimum size of $G$?
\end{problem}

Denote such a minimum number by $\varepsilon(\nu, k)$. In this
section, we solve the problem for $k=1, \ 2$.

\begin{theorem} \label{theorem:k=1}
For an even number $\nu\geq 4$, $\varepsilon(\nu,\ 1)=\nu+2$.
\end{theorem}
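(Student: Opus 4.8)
The plan is to establish two bounds: a lower bound $\varepsilon(\nu,1)\geq \nu+2$ and a matching construction achieving $\nu+2$ edges. For the lower bound, since a $1$-extendable graph is $2$-connected (Lemma \ref{lemma:Plu:con}), we have $\delta(G)\geq 2$, so $G$ has at least $\nu$ edges. I would argue that equality $\nu$ is impossible and that $\nu+1$ is also impossible, forcing size $\geq\nu+2$. A graph on $\nu$ vertices with exactly $\nu$ edges and $\delta\geq 2$ must be a single cycle $C_\nu$; but a cycle is bipartite when $\nu$ is even, and we require $G$ non-bipartite, so size $\nu$ is ruled out. For size $\nu+1$, the degree sum is $2\nu+2$, so with $\delta\geq 2$ there are exactly two vertices of degree $3$ (or one of degree $4$) and the rest of degree $2$; I would analyze this sparse structure directly and show such a graph either fails to be non-bipartite or fails $1$-extendability, which is the first delicate case to handle carefully.

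For the upper bound, I would exhibit an explicit non-bipartite $1$-extendable graph on $\nu$ vertices with exactly $\nu+2$ edges. A natural candidate is to take the even cycle $C_\nu=(0,1,\ldots,\nu-1,0)$ and add two chords chosen to create an odd cycle (destroying bipartiteness) while preserving $1$-extendability. One clean choice is to add a single short odd chord, say $(0,2)$, together with one more edge to maintain the extendability and parity structure. Using the Tutte-type characterization implicit in the theory, $1$-extendability of $G$ is equivalent to: $G$ has a perfect matching, and for every edge $e$, $G$ has a perfect matching containing $e$; equivalently every edge lies in some perfect matching. I would verify this edge-by-edge for the constructed graph, which is routine given its near-cycle structure.

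The main obstacle I anticipate is the lower bound argument ruling out size $\nu+1$, since one must enumerate the possible degree sequences with $\delta\geq 2$ and total $\nu+1$ edges, and for each show incompatibility with being simultaneously non-bipartite and $1$-extendable. The degree sequence is almost entirely $2$'s with a tiny excess of $2$ distributed among one or two vertices, so the graph is a cycle with at most one or two extra chords. I would handle this by observing that adding a single chord to an even cycle $C_\nu$ splits it into two paths whose lengths have the same parity as the chord's span; to make the graph non-bipartite one needs an odd cycle, which requires a chord spanning an even number of steps, but I would show that any single such chord, or the two-chord configurations arising from degree sum $\nu+1$, leaves some edge lying in no perfect matching, violating $1$-extendability by the edge-cover criterion. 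This case analysis is finite and the structural constraints from $2$-connectivity limit the configurations sharply.

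Once both bounds are in hand, combining them gives $\varepsilon(\nu,1)=\nu+2$ for every even $\nu\geq 4$. I would close by noting the construction works uniformly across all such $\nu$, so no small-case exceptions arise beyond checking $\nu=4$ directly, where the bound $\nu+2=6$ is easily matched by $K_4$ with a subdivided edge or a similar explicit graph.
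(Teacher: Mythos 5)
Your overall skeleton matches the paper's: lower bound $\nu$ from $\delta(G)\ge 2$, exclusion of size $\nu$ because the only candidate is the bipartite cycle $C_\nu$, a construction with $\nu+2$ edges (the paper uses $C_\nu$ plus the two chords $(v_0,v_2)$ and $(v_1,v_3)$, which for $\nu=4$ is just $K_4$), and a separate argument excluding size $\nu+1$. The construction and the first two steps are fine as sketched.

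The gap is in the step you yourself flag as delicate, the exclusion of $\nu+1$ edges. Your reduction of that case to ``a cycle $C_\nu$ with an extra chord'' is not valid: a connected graph with $\nu$ vertices, $\nu+1$ edges and $\delta\ge 2$ need not be Hamiltonian. After using $2$-connectivity to rule out a single vertex of degree $4$ (two cycles meeting at a cut vertex) and the dumbbell configuration, what remains is a subdivided theta graph: two branch vertices of degree $3$ joined by three internally disjoint paths of lengths $a,b,c$ with $a+b+c=\nu+1$, and this is a cycle plus a \emph{chord} only when some path has length $1$. For example, with $\nu=6$ take paths of lengths $2,2,3$: this graph is non-bipartite, has a perfect matching, is not Hamiltonian, and is not covered by your ``chord span parity'' analysis. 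The case can still be closed along your lines (exactly one of $a,b,c$ is odd, and a parity count of the internal vertices of the two even-length paths forces both branch vertices to be matched into those paths, leaving the first edge of the odd path in no perfect matching), but that argument has to be made for general ears, not chords. The paper avoids the enumeration entirely: it picks an odd cycle $Q$, notes $G-Q\neq\emptyset$ since $\nu$ is even, uses $\kappa(G)\ge 2$ to find two vertices $v_0,v_i$ of $Q$ with edges leaving $Q$ (hence of degree $3$), and observes that depending on the parity of $i$ either $v_0v_1$ or $v_{2l}v_0$ lies in no perfect matching. Finally, a small slip: for $\nu=4$, ``$K_4$ with a subdivided edge'' has $5$ vertices and no perfect matching; the example you want is $K_4$ itself, which the general two-chord construction already yields.
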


\begin{proof}
We have $\delta(G) \geq 2$ in a $1$-extendable graph $G$. Hence, a
$1$-extendable graph on $\nu$ vertices has at least $\nu$ edges.
However, a connected graph with $\nu$ vertices and $\nu$ edges can
only be the cycle $C_\nu$, which is bipartite. Therefore
$\varepsilon(\nu,1)\geq \nu+1$. Take a cycle $C=v_0v_1\ldots
v_{\nu-1}v_0$, joint $v_0$ to $v_2$ and $v_1$ to $v_3$ we get a
$1$-extendable non-bipartite graph $G$. Therefore
$\varepsilon(\nu, 1)\leq \nu+2$.

Let $G$ be a $1$-extendable non-bipartite graph with $\nu$
vertices and $\nu+1$ edges. By the Handshaking Lemma, $G$ has
precisely two vertices of degree $3$, while the other vertices are
of degree $2$. Since $G$ is non-bipartite, there is an odd cycle
$Q=v_0v_1\ldots v_{2l}v_0$ in $G$, and $G-Q$ is not null. By Lemma
\ref{lemma:Plu:con}, $\kappa(G)\geq 2$. So, there is at least two
vertices on $Q$, say $v_0$ and $v_i$, where $1\leq i \leq 2l$, who
send edges to $G-Q$, and hence $d(v_0)=d(v_i)=3$. If $i$ is odd,
then $v_0v_1$ is not contained in any perfect matching of $G$. If
$i$ is even, then $v_{2l}v_0$ is not contained in any perfect
matching of $G$. These contradict that $G$ is $1$-extendable.
Hence $\varepsilon(\nu, 1)\neq \nu+1$. So $\varepsilon(\nu,
1)=\nu+2$.
\end{proof}

Now we consider $2$-extendable non-bipartite graphs. For a
$2$-extendable graph $G$, $\delta(G)\geq 3$. Hence,
$\varepsilon(\nu,2)\geq 3\nu/2$. The next theorem shows that the
bound can be achieved when $\nu$ is large. We will prove the
theorem in the rest of this section.

\begin{theorem} \label{theorem:2-ext_min_edges}
For an even integer $\nu\geq 6$, $$\varepsilon(\nu,2)=\left\{
\begin{array}
{l l}
15, & \mbox{if}\ \nu=6, \\
16, & \mbox{if}\ \nu=8, \\
19, & \mbox{if}\ \nu=10, \\
20, & \mbox{if}\ \nu=12, \\
3\nu/2, & \mbox{if}\ \nu\geq 14. \\
\end{array}\right.$$
\end{theorem}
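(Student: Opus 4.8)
The statement asks me to determine $\varepsilon(\nu,2)$, the minimum size of a $2$-extendable non-bipartite graph on $\nu$ vertices, for all even $\nu\geq 6$. The general lower bound $\varepsilon(\nu,2)\geq 3\nu/2$ is already established via $\delta(G)\geq 3$. The heart of the theorem is twofold: (i) constructing $2$-extendable non-bipartite graphs that meet (or come close to) the bound, and (ii) proving matching lower bounds, especially ruling out the bound $3\nu/2$ for the small exceptional cases $\nu\in\{6,8,10,12\}$ where the true minimum is strictly larger.

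\begin{proof}[Proof sketch]
\textbf{Constructions (upper bounds).} For each $\nu$ I would exhibit an explicit $2$-extendable non-bipartite graph of the claimed size. For the generic range $\nu\geq 14$, the goal is a cubic graph on $\nu$ vertices with $3\nu/2$ edges; I would look for a cubic, $3$-connected, non-bipartite graph and verify $2$-extendability directly via the odd-component criterion, or better, invoke Lemma~\ref{lemma:zz1:ful_ext}: every fullerene graph is $2$-extendable, is cubic (hence has exactly $3\nu/2$ edges), planar, $3$-connected, and contains pentagonal faces (odd cycles), so it is non-bipartite. Since fullerenes exist for all even $\nu\geq 20$ except $\nu=22$, these settle most large cases; for the remaining values $\nu\in\{14,16,18,22\}$ (and possibly a few others), I would patch in ad~hoc cubic constructions (e.g.\ prisms, M\"obius--Kantor-type graphs, or small modifications joining short odd cycles), checking $2$-extendability by hand. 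For $\nu\in\{6,8,10,12\}$ I would construct the extremal graphs of sizes $15,16,19,20$ respectively and verify their $2$-extendability directly.

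\textbf{Lower bounds.} For $\nu\geq 14$ the bound $3\nu/2$ is already the trivial degree bound, so nothing more is needed there once a construction exists. The real work is the \emph{strict} lower bounds for $\nu\in\{6,8,10,12\}$, i.e.\ showing no $2$-extendable non-bipartite graph of size $3\nu/2$ (and of the intermediate sizes $3\nu/2+1$, etc.) exists. The strategy is a degree-sequence and case analysis: a graph with $\nu$ vertices and exactly $3\nu/2$ edges is cubic, so I would study cubic non-bipartite graphs on $6,8,10,12$ vertices and show each fails $2$-extendability. The key structural tools are Lemma~\ref{lemma:Plu:con} ($2$-extendable $\Rightarrow$ $3$-connected), Lemma~\ref{lemma:Plu:ind} (in a $k$-extendable graph a vertex of degree $k+1=3$ has an independent neighborhood, so here \emph{every} vertex has independent neighborhood, i.e.\ the graph is triangle-free), and Lemma~\ref{lemma:mv1:ext_ind} ($\alpha(G)\leq \nu/2-2$). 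Triangle-freeness together with $3$-connectivity on so few vertices is extremely restrictive; I expect to enumerate the finitely many cubic, triangle-free, $3$-connected graphs on each of $6,8,10,12$ vertices and check that the non-bipartite ones are not $2$-extendable (typically by exhibiting a matching of size $2$ that extends to no perfect matching, or a bad vertex cut). Bumping the size by one or two edges similarly forces a near-cubic degree sequence, which I would handle by the same enumeration-plus-obstruction method until the claimed minima $15,16,19,20$ are reached.

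\textbf{Main obstacle.} The routine degree counting and the fullerene-based large-$\nu$ construction are straightforward; the delicate part is the exceptional small cases. The difficulty is that ruling out size $3\nu/2$ (and the intermediate sizes) for $\nu=6,8,10,12$ requires a genuinely finite but nontrivial classification of sparse triangle-free $3$-connected non-bipartite graphs, and then a per-graph verification that each fails $2$-extendability. Organizing this enumeration cleanly---ideally by extracting a general lemma that a triangle-free cubic $3$-connected non-bipartite graph on few vertices must contain a short structure (a short odd cycle with limited attachment, as in the proof of Theorem~\ref{theorem:k=1}) that destroys extendability---is where the real care lies, and is the step I expect to occupy the bulk of the remaining proof.
\end{proof}
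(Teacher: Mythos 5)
Your overall architecture matches the paper's: the trivial bound $\varepsilon(\nu,2)\geq 3\nu/2$ from $\delta\geq 3$, fullerene graphs (cubic, non-bipartite because of pentagonal faces, $2$-extendable by Lemma~\ref{lemma:zz1:ful_ext}) for all even $\nu\geq 20$ except $\nu=22$, ad hoc cubic constructions for $\nu=14,16,18,22$, and explicit extremal graphs plus a case analysis for the small exceptional values. That part of the plan is sound and is exactly what the paper does.

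The genuine gap is in the lower bounds for $\nu=6$ and $\nu=8$. The claimed minima there are $15=\binom{6}{2}$ and $16=2\nu$, far above $3\nu/2=9$ and $12$, and your proposed method --- enumerate cubic, triangle-free, $3$-connected non-bipartite graphs and then ``bump the size by one or two edges'' --- cannot reach them: the extremal graph for $\nu=6$ is $K_6$, which is neither cubic nor triangle-free, and you would have to traverse six intermediate edge counts with non-cubic degree sequences that your sketch never addresses. The paper closes this with Lemma~\ref{lemma:LY:largek}: for $k=2$ and $\nu\leq 8$ one has $k\geq\nu/4$, hence $\kappa(G)\geq 2k=4$ and $\delta(G)\geq 4$, which immediately forces at least $2\nu$ edges; for $\nu=6$ one then checks that $G$ must be $K_6$ (indeed by Lemma~\ref{lemma:ZWL:eqv} such a $G$ is $4$-factor-critical, so $\delta(G)\geq 5$). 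Without invoking one of these lemmas your plan has no route to $15$ and $16$. Two smaller but related problems: your inference that the graph is triangle-free from Lemma~\ref{lemma:Plu:ind} is valid only when $G$ is cubic, whereas the paper's $\nu=10$ argument must handle degree sequences such as $(5,4,4,4,4,3,3,3,3,3)$ and works instead through $\alpha(G)\leq\nu/2-2$ and a count of common neighbours of independent degree-$3$ vertices; and for $\nu=10$ the sizes $15$ through $18$ (four values, not one or two) all have to be excluded.
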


\begin{figure}[!htbp]
\centering
\includegraphics[width=0.155\linewidth]{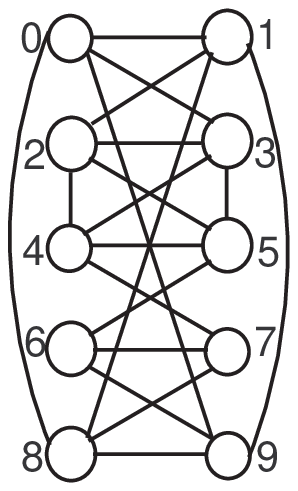}
\caption{$G_1$, a $2$-extendable non-bipartite graph with $10$
vertices and $19$ edges} \label{figure:nu10ep19}
\end{figure}

We will prove several theorems, who will be combined to obtain
Theorem \ref{theorem:2-ext_min_edges}.

\begin{theorem} \label{theorem:nu10}
$\varepsilon(10,2)=19$.
\end{theorem}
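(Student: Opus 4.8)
The plan is to establish the result by two separate arguments: a construction giving the upper bound $\varepsilon(10,2)\le 19$, and a counting/case-analysis argument giving the lower bound $\varepsilon(10,2)\ge 19$. The upper bound is already in hand from Figure~\ref{figure:nu10ep19}, which exhibits a $2$-extendable non-bipartite graph $G_1$ on $10$ vertices with $19$ edges; I would simply verify its three defining properties (connectivity and non-bipartiteness by inspection, and $2$-extendability either directly or by checking against Lemma~\ref{lemma:Plu:con} and the matching condition). So the real content is the lower bound: I must show that no $2$-extendable non-bipartite graph on $10$ vertices has $18$ or fewer edges.

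First I would record the degree constraints. Since $G$ is $2$-extendable, Lemma~\ref{lemma:Plu:con} gives $\kappa(G)\ge 3$, so $\delta(G)\ge 3$; hence any such graph on $10$ vertices has at least $15$ edges. Suppose for contradiction that $G$ is $2$-extendable, non-bipartite, on $10$ vertices with exactly $18$ edges (the argument must then also rule out $15,16,17$, but the Handshaking Lemma pins down the degree sequence in each case: with $\varepsilon\le 18$ the sum of degrees is at most $36$, so the ``excess'' above the all-$3$ degree sequence is small, and I can enumerate the possible degree sequences). The key structural tool is Lemma~\ref{lemma:Plu:ind}: every vertex $u$ of degree $3$ has an independent neighborhood. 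Since most vertices have degree $3$ when $\varepsilon$ is near $15$, the graph is forced to be ``locally triangle-free'' around these low-degree vertices, which is exactly the kind of rigidity needed to derive a contradiction with non-bipartiteness.

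The central idea I would pursue mirrors the proof of Theorem~\ref{theorem:k=1}: non-bipartiteness forces an odd cycle, and I would show that the scarcity of edges prevents the matching-extension condition from holding across that odd cycle. Concretely, I would locate a shortest odd cycle $Q$, use $\kappa(G)\ge 3$ to control how $Q$ attaches to $G-Q$, and then exhibit a size-$2$ matching (one edge chosen on $Q$ to force a parity obstruction, as in the $v_0v_1$ versus $v_{2l}v_0$ dichotomy of Theorem~\ref{theorem:k=1}, together with one edge chosen to isolate an odd residual component) that extends to no perfect matching. The low edge count is what guarantees the required vertices exist with the needed degrees and that $G-Q$ splits unfavorably. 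I expect the main obstacle to be the case analysis over the few admissible degree sequences for $\varepsilon\in\{15,16,17,18\}$: for each I must either produce the obstructing matching directly or show the independence constraints from Lemma~\ref{lemma:Plu:ind} force $G$ to be bipartite (or disconnected), contradicting the hypotheses. Handling the densest borderline case $\varepsilon=18$, where enough vertices have degree $\ge 4$ that the local triangle-freeness weakens, will be the delicate part, and there I would lean on the global matching condition rather than purely local degree arguments.
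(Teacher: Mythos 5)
Your upper bound is fine and matches the paper. But your plan for the lower bound has a genuine gap: it never invokes Lemma~\ref{lemma:mv1:ext_ind} (Maschlanka--Volkmann), which gives $\alpha(G)\le \nu(G)/2-k=3$ for a $2$-extendable \emph{non-bipartite} graph on $10$ vertices. That independence-number bound is the engine of the paper's entire argument. From $\alpha(G)\le 3$ together with Lemma~\ref{lemma:Plu:ind} the paper first shows no three degree-$3$ vertices are independent and no vertex has three degree-$3$ neighbors, which forces at most five vertices of degree $3$ and hence $|E(G)|\ge\lceil(3\cdot 5+4\cdot 5)/2\rceil=18$ in one stroke (disposing of $15,16,17$ without any degree-sequence enumeration). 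It then pins the degree sequence to six $4$'s and four $3$'s, observes that the subgraph $H$ induced by the degree-$3$ vertices is triangle-free with $\alpha(H)\le 2$ --- so $H$ is one of only three graphs on four vertices --- and kills each configuration by further independence and matching-extension arguments.

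Your proposed mechanism for the hard case, namely mirroring Theorem~\ref{theorem:k=1} by locating a shortest odd cycle $Q$ and producing a parity obstruction from how $Q$ attaches to $G-Q$, is unlikely to close the argument at $18$ edges. The Theorem~\ref{theorem:k=1} argument works because a connected graph with $\nu+1$ edges is essentially a cycle with one extra edge, so exactly two vertices leave $Q$ and the parity of their positions yields an unextendable edge. With $18$ edges on $10$ vertices the excess over $3$-regularity is $6$, the odd cycle can attach to the rest of the graph at many vertices, and deleting the endpoints of one edge of $Q$ need not create an odd component; you acknowledge this is ``the delicate part'' but offer no concrete replacement tool. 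Without the $\alpha(G)\le 3$ constraint (or some equally strong global structural input), the case analysis over attachments of $Q$ does not obviously terminate, so as written the plan does not constitute a proof of the lower bound.
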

\begin{proof}
It can be verified that the graph we show in Figure
\ref{figure:nu10ep19} is a $2$-extendable non-bipartite
graph\footnote{The extendibility of the graphs that are claimed to
be $2$-extendable in this paper is verified in
\ref{section:Appendix A}.} with $10$ vertices and $19$ edges. To
prove that $\varepsilon(10,2)=19$, it suffices to show that there
do not exist $2$-extendable non-bipartite graphs with $10$
vertices and no more than $18$ edges. Assume that we can find such
a graph $G$. By Lemma \ref{lemma:mv1:ext_ind}, $\alpha(G)\leq 3$.
Firstly we prove the following claims.

\setcounter{claim}{0}
\begin{claim} \label{claim:3_ind_degree_3}
There do not exist three independent vertices of degree $3$ in
$G$.
\end{claim}

We prove Claim \ref{claim:3_ind_degree_3} by contradiction.
Suppose there are three independent vertices $u_0$, $u_1$, $u_2$
of degree $3$ in $G$. By Lemma \ref{lemma:Plu:ind}, the
neighborhood of $u_i$, $0\leq i \leq 2$, is independent. Since
$\alpha(G)\leq 3$ and $\nu(G)=10$, we have $|N(u_0)\cup N(u_1)\cup
N(u_2)|=7$.

Suppose that $|N(u_0)\cap (N(u_1)\cup N(u_2))|\leq 1$. Then
$(N(u_0)\backslash (N(u_1)\cup N(u_2))) \cup \{u_1,\ u_2\}$ is an
independent set of order at less $4$, a contradiction. So
$|N(u_0)\cap (N(u_1)\cup N(u_2))|\geq 2$. Similarly, we have
$|N(u_1)\cap (N(u_0)\cup N(u_2))|\geq 2$ and $|N(u_2)\cap
(N(u_0)\cup N(u_1))|\geq 2$. But then $|N(u_0)\cup N(u_1)\cup
N(u_2)| \leq 1+1+1+(2+2+2)/2=6$, contradicting $|N(u_0)\cup
N(u_1)\cup N(u_2)|=7$. Therefore, there do not exist three
independent vertices of degree $3$ in $G$.

\begin{claim} \label{claim:degree_4_neighbor}
Every vertex in $G$ has at most two neighbors of degree $3$.
\end{claim}

Suppose there is a vertex $u$ in $G$, who has at least three
neighbors, say $v_0$, $v_1$ and $v_2$, of degree $3$. By Claim
\ref{claim:3_ind_degree_3}, at least two of them, say $v_0$ and
$v_1$, are adjacent. Then $N(v_0)$ is not independent,
contradicting Lemma \ref{lemma:Plu:ind}. Hence Claim
\ref{claim:degree_4_neighbor} holds.

Suppose there are at least six vertices of degree $3$ in $G$, one
of which being $u$. By Claim \ref{claim:degree_4_neighbor}, $u$
can be adjacent to at most two other vertices of degree $3$. Then
there are at least three vertices of degree $3$ that are not
adjacent to $u$, at least two of which are not adjacent. Such two
vertices and $u$ are three independent vertices of degree $3$ in
$G$, contradicting Claim \ref{claim:3_ind_degree_3}. Therefore,
there are no more than five vertices of degree $3$ in $G$.
Moreover, $|E(G)|\geq \lceil(3\times 5+4\times 5)/2\rceil=18$, and
equality must hold.

Suppose there is a vertex $v$ such that $d(v)\geq 6$. By Claim
\ref{claim:degree_4_neighbor}, $v$ has at least four neighbors of
degree no less than $4$. Then $G$ has at least $\lceil(6+4\times
4+5\times 3)/2\rceil = 19$ edges, a contradiction. Hence
$\Delta(G)\leq 5$.

Now we show that $\Delta(G)<5$. Suppose that $\Delta(G)= 5$. Since
there are no more than five vertices of degree $3$, the
non-increasing degree sequence of $G$ must be $(5,\ 4,\ 4,\ 4,\
4,\ 3,\ 3,\ 3,\ 3,\ 3)$. Assume that there exist a vertex $u_0$ of
degree $3$, which is adjacent to at most one vertex of degree $3$
in $G$. There are at least other three vertices of degree $3$ that
are not adjacent to $u_0$, at least $2$ of which, denoted by $u_1$
and $u_2$, are not adjacent. Then $u_0$, $u_1$ and $u_2$ are three
independent vertices of degree $3$ in $G$, contradicting Claim
\ref{claim:3_ind_degree_3}. Hence, every vertex of degree $3$ in
$G$ has exactly two neighbors of degree $3$. Then, the five
vertices of degree $3$ in $G$ constitute a cycle, denoted by
$C_0=v_0v_1v_2v_3v_4v_0$. Furthermore, for each $0\leq i\leq 4$,
$v_i$ sends an edge to $G-C_0$.

Suppose the five vertices in $G-C_0$ are adjacent to $v_0$, $v_1$,
$v_2$, $v_3$ and $v_4$, respectively. Denote the vertex adjacent
to $v_i$ by $u_i$, $0\leq i\leq 4$. Noticing that $v_0$ and $v_2$
are not adjacent, and $u_1$, $u_3$ and $u_4$ are not adjacent to
$v_0$ or $v_2$, by $\alpha(G)\leq 3$, $u_1$, $u_3$ and $u_4$ must
be adjacent to each other. Similar analysis shows that $u_1$,
$u_2$, $u_3$, $u_4$ and $u_5$ must be mutually adjacent, a
contradiction.

Therefore, we can assume that there are two vertices on $C_0$, say
$v_0$ and $v_2$, share a common neighbor $u_0\in V(G-C_0)$. Denote
the other four vertices in $G-C_0$ by $u_1$, $u_2$, $u_3$ and
$u_4$. Since $v_0$ and $v_2$ are not adjacent, and they are not
adjacent to $u_1$, $u_2$, $u_3$ or $u_4$, by $\alpha(G)\leq 3$,
$u_1$, $u_2$, $u_3$ and $u_4$ must be mutually adjacent. Each of
$v_3$ and $v_4$ must send an edge to $\{u_1,\ u_2,\ u_3,\ u_4\}$.
Furthermore, $v_3$ and $v_4$ cannot have a common neighbor.
Without loss of generality, suppose $v_3u_3,\ v_4u_4\in E(G)$. Now
consider the independent vertices $u_0$, $v_1$ and $v_3$. Any
other vertex in $G$ must be adjacent to one of them. Therefore,
there must be an edge from $\{u_0,\ v_1\}$ to $u_4$. Similarly,
there is an edge from $\{u_0,\ v_1\}$ to $u_3$. Then we have
$d(u_3),\ d(u_4)\geq 5$, contradicting the degree sequence of $G$.
Hence, $\Delta(G)<5$.

So, all vertices in $G$ are of degree $3$ or $4$. Since
$|E(G)|=18$, a simple calculation shows that there are six
vertices of degree $4$ and four vertices of degree $3$.

\begin{figure}[!htbp]
\centering
\includegraphics[width=0.4\linewidth]{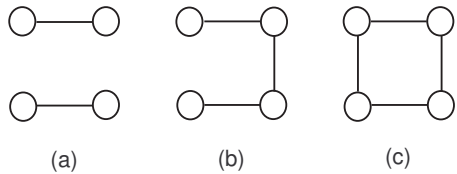}
\caption{Three possible configuration of $H$}
\label{figure:10vertices}
\end{figure}

Consider the subgraph $H$ of $G$ induced by all vertices of degree
$3$. By Lemma \ref{lemma:Plu:ind}, $H$ cannot contain a triangle.
By Claim \ref{claim:3_ind_degree_3}, $\alpha(H)\leq 2$. Hence, $H$
can only be (a), (b) or (c) in Figure \ref{figure:10vertices}. We
will discuss them case by case.

Let $H$ be (a) of Figure \ref{figure:10vertices}. Denote the two
edges in $H$ by $u_0u_1$ and $u_2u_3$. Then $u_0$, $u_1$ are
adjacent to two vertices of degree $4$ respectively, and they do
not have a common neighbor. Denote the other neighbors of $u_0$ by
$v_0$, $v_1$, those of $u_1$ by $v_2$, $v_3$. By Lemma
\ref{lemma:Plu:ind}, $v_0v_1,\ v_2v_3\notin E(G)$. Denote the
other two vertices of degree $4$ in $G$ by $v_4$ and $v_5$. Since
$\{u_0,\ v_2,\ v_3\}$ is an independent set of order $3$, $u_2$
and $u_3$ send edges to $\{v_2,\ v_3\}$. Moreover, $u_2$ and $u_3$
cannot have a common neighbor. Without loss of generality we may
assume $u_2v_2,\ u_3v_3\in E(G)$. Similarly, we can assume
$u_2v_0,\ u_3v_1\in E(G)$. Then $\{u_0,\ u_2,\ v_3\}$, $\{u_0,\
u_3,\ v_2\}$, $\{u_1,\ u_2,\ v_1\}$ and $\{u_1,\ u_3,\ v_0\}$ are
independent sets of order $3$. Hence $v_0$, $v_1$, $v_2$ and $v_3$
must be adjacent to $v_4$ and $v_5$. But then $v_4v_5\notin E(G)$,
and $\{u_0,\ u_2,\ v_4,\ v_5\}$ is an independent set of order
$4$, contradicting $\alpha(G)\leq 3$.

Let $H$ be (b) of Figure \ref{figure:10vertices}, a path
$u_0u_1u_2u_3$. Denote the other neighbors of $u_0$ by $v_0$ and
$v_1$. Then $\{u_1,\ v_0,\ v_1\}$ is an independent set. By
$\alpha(G)\leq 3$, $u_3$ must be adjacent to $v_0$ or $v_1$.
Without loss of generality, let $u_3v_0\in E(G)$. Suppose that
$u_3v_1\in E(G)$. Let $w$ be the neighbor of $v_0$ which is
different from $u_0$ and $u_3$, obviously $w\notin \{u_1,\ u_2\}$.
Then $\{u_1u_2,\ v_0w\}$ is not contained in any perfect matching
of $G$, contradicting $2$-extendibility of $G$. Hence,
$u_3v_1\notin E(G)$. So $u_3$ has another neighbor $v_2$. Denote
the other three vertices of degree $4$ in $G$ by $v_3$, $v_4$ and
$v_5$. Since $u_0$ and $u_3$ are not adjacent and they send no
edge to $\{v_3, v_4, v_5\}$, $v_3$, $v_4$ and $v_5$ must form a
triangle or we get an independent set of order $4$ in $G$. Since
$\{u_0,\ u_2,\ v_2\}$ is an independent set of order $3$, each of
$v_3$, $v_4$ and $v_5$ sends an edge to $u_2$ or $v_2$. Similarly,
each of $v_3$, $v_4$ and $v_5$ sends an edges to $u_1$ or $v_1$.
Then $v_3$, $v_4$ and $v_5$ cannot send any edge to $v_0$. So,
$N(v_0)=\{u_0, u_3\}$, a contradiction.

Let $H$ be (c) of Figure \ref{figure:10vertices}, a cycle
$u_0u_1u_2u_3u_0$. Suppose that $u_0$ and $u_2$ have a common
neighbor other than $u_1$ and $u_3$. Then the other five vertices
of degree $4$ are not adjacent to $u_0$ or $u_2$, so they must be
mutually adjacent or we get an independent set of order $4$ in
$G$. But then $G$ is not connected, a contradiction. Therefore,
$N(u_0) \cap N(u_2)=\{u_1, u_3\}$. Similarly, $u_1$ and $u_3$ do
not have any common neighbor other than $u_0$ and $u_2$. Hence,
each $u_i$ has a neighbor $v_i \notin \{u_0,\ u_1,\ u_2,\ u_3\}$,
and $v_i\neq v_j$ for $i\neq j$, $0\leq i,j \leq 3$. Denote by
$v_4$ and $v_5$ the other two vertices left. If there are any two
vertices in $\{v_0,\ v_2,\ v_4,\ v_5\}$ that are not adjacent,
then they form an independent set of order $4$ with $u_1$ and
$u_3$, a contradiction. Hence $v_0$, $v_2$, $v_4$ and $v_5$ are
mutually adjacent. Similarly $v_1$, $v_3$, $v_4$ and $v_5$ are
mutually adjacent. But then $d(v_4),\ d(v_5)\geq 5$, a
contradiction.

Thus we have led to contradictions in all cases and deny the
existence of a $2$-extendable non-bipartite graph with $10$
vertices and no more than $18$ edges, and conclude that
$\varepsilon(10,2)=19$.
\end{proof}

\begin{figure}[!htbp]
\centering
\includegraphics[width=0.27\linewidth]{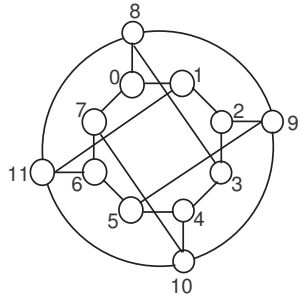}
\caption{$G_2$, the unique $2$-extendable non-bipartite graphs
with $12$ vertices and $20$ edges} \label{figure:nu12ep20}
\end{figure}

\begin{theorem} \label{theorem:nu12}
$\varepsilon(12,2)=20$. Furthermore, there is only one
$2$-extendable non-bipartite graphs with $12$ vertices and $20$
edges up to isomorphism, as showed in Figure
\ref{figure:nu12ep20}.
\end{theorem}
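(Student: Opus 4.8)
The plan is to follow the template of Theorem~\ref{theorem:nu10}: first exhibit the graph $G_2$ of Figure~\ref{figure:nu12ep20} as a witness (its $2$-extendability checked as in the appendix) so that $\varepsilon(12,2)\le 20$, and then rule out every $2$-extendable non-bipartite graph $G$ on $12$ vertices with at most $19$ edges, which yields $\varepsilon(12,2)\ge 20$. Throughout I would use the same structural facts: $\delta(G)\ge k+1=3$ and $3$-connectivity (Lemma~\ref{lemma:Plu:con}), independence of $N(u)$ for every degree-$3$ vertex $u$ (Lemma~\ref{lemma:Plu:ind}), and $\alpha(G)\le \nu/2-k=4$ (Lemma~\ref{lemma:mv1:ext_ind}). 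In particular a degree-$3$ vertex lies in no triangle, so the subgraph induced by the degree-$3$ vertices is triangle-free. Since $\delta(G)\ge 3$, a degree-sum count leaves only three possibilities below $20$ edges: the cubic sequence $(3^{12})$ with $18$ edges, and $(4,4,3^{10})$ or $(5,3^{11})$ with $19$ edges.

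The cubic case is the clean one. If $G$ is $3$-regular then every vertex has degree $k+1=3$, so by Lemma~\ref{lemma:Plu:ind} the whole graph is triangle-free; a triangle-free subcubic graph on $12$ vertices has $\alpha\ge \lceil 5\cdot 12/14\rceil =5$ (by a direct counting/discharging argument, or by Staton's ratio $\alpha\ge 5n/14$), contradicting $\alpha(G)\le 4$. The genuine difficulty is the two $19$-edge sequences, and it is worth flagging why the $\nu=10$ method does not transfer verbatim: for $\nu=12$, $\alpha\le4$, four independent degree-$3$ vertices $u_0,\dots,u_3$ force their neighbourhoods to cover the other $8$ vertices, and the inclusion–exclusion bound used in Claim~\ref{claim:3_ind_degree_3} of the proof of Theorem~\ref{theorem:nu10} now gives $|N(u_0)\cup\cdots\cup N(u_3)|\le 4+4=8$ \emph{with equality}, so one cannot simply forbid four independent degree-$3$ vertices. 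Hence for $(4,4,3^{10})$ and $(5,3^{11})$ I would argue structurally in the spirit of configurations (a)--(c) of Theorem~\ref{theorem:nu10}: since $10$ or $11$ vertices have degree $3$ and induce a triangle-free graph with $\alpha\le 4$, I would locate a suitable independent set, repeatedly invoke that the common non-neighbours of an independent pair must be mutually adjacent (else $\alpha>4$), and use the forbidden-pair condition (two independent edges lying in no common perfect matching is impossible) together with $3$-connectivity to push the degrees above the allotted sequence.

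The uniqueness statement carries the heaviest load. With exactly $20$ edges the degree sum is $40$, leaving the five admissible sequences $(4^4,3^8)$, $(5,4^2,3^9)$, $(5^2,3^{10})$, $(6,4,3^{10})$ and $(7,3^{11})$. The plan is first to discard all but the sequence realised by $G_2$ using the same constraints — a vertex of large degree together with a Claim~\ref{claim:degree_4_neighbor}-type bound on its degree-$3$ neighbours forces too many high-degree vertices, hence too many edges — and then, for the surviving sequence, to reconstruct $G$: the independent neighbourhoods of the eight degree-$3$ vertices and the bound $\alpha(G)\le 4$ leave only finitely many ways to attach them to the remaining vertices, and the $2$-extendability test (no two independent edges missed by every perfect matching) should collapse these to the single graph of Figure~\ref{figure:nu12ep20}. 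The main obstacle I anticipate is precisely this final reconstruction, together with the tightness noted above: unlike the $\nu=10$ case, the extremal graph is genuinely irregular, the independence counts are attained with equality, and the case tree branches widely, so care is needed to verify that every branch either violates $\alpha(G)\le 4$, violates $3$-connectivity, or produces a forbidden pair of independent edges, leaving exactly $G_2$.
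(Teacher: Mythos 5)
Your overall strategy is workable in outline, but as written it has genuine gaps exactly where the difficulty lies, and one of the tools you propose to close them is not available. First, the elimination of the two $19$-edge degree sequences $(4,4,3^{10})$ and $(5,3^{11})$ is only gestured at (``I would locate a suitable independent set\ldots''); no argument is actually given. Second, for the uniqueness step you propose to discard the degree sequences $(5,4,4,3^9)$, $(5,5,3^{10})$, $(6,4,3^{10})$ and $(7,3^{11})$ using ``a Claim~\ref{claim:degree_4_neighbor}-type bound'' on the number of degree-$3$ neighbours of a vertex. But the proof of that claim in Theorem~\ref{theorem:nu10} rests entirely on Claim~\ref{claim:3_ind_degree_3} (no three independent degree-$3$ vertices), which fails for $\nu=12$: by Lemma~\ref{lemma:Plu:ind} three degree-$3$ neighbours of a common vertex must be pairwise non-adjacent, and three independent degree-$3$ vertices are perfectly possible here --- the extremal graph $G_2$ itself contains four of them. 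You correctly observe that the inclusion--exclusion count now closes with equality, but you then retreat to a degree-sequence enumeration without supplying the replacement argument, and the final reconstruction inside $(4^4,3^8)$ is explicitly left open. The only part that is actually complete is the cubic case via Staton's $\alpha\ge 5n/14$ bound for triangle-free subcubic graphs, which is correct though it imports an external theorem.

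For contrast, the paper never splits by edge count or degree sequence. It takes any $2$-extendable non-bipartite $G$ on $12$ vertices with at most $20$ edges, notes that at least $8$ vertices have degree $3$, and splits on whether four of them are independent. In the affirmative case, the equality $|N(u_0)\cup\cdots\cup N(u_3)|=8$ together with $|N(u_i)\cap N(u_j)|\le 1$ forces the four vertices and their pairwise common neighbours onto an $8$-cycle $u_0v_0u_1v_1u_2v_2u_3v_3u_0$; the remaining four vertices $w_i$ are forced into a $4$-cycle, the $v_i$ are forced to be pairwise non-adjacent, and counting gives at least $8+4+4+4=20$ edges with equality, after which the forbidden-pair condition pins down the attachment of the $v_i$ to the $w_j$ and yields $G_2$ uniquely. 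In the negative case one finds a degree-$3$ vertex all of whose neighbours have degree $3$ and derives a contradiction. This single dichotomy delivers the lower bound, the impossibility of $18$ or $19$ edges, and the uniqueness simultaneously --- precisely the three items your proposal leaves unfinished. I would recommend adopting the independent-set dichotomy (which you were one step away from, having already noticed the equality case) rather than trying to salvage the degree-sequence enumeration.
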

\begin{proof}
It can be checked that the graph showed in Figure
\ref{figure:nu12ep20} is $2$-extendable. Let $G$ be a
$2$-extendable non-bipartite graph with $12$ vertices and no more
than $20$ edges, we prove that $G$ must be isomorphic to the graph
showed in Figure \ref{figure:nu12ep20}.

By Lemma \ref{lemma:mv1:ext_ind}, $\alpha(G)\leq4$. Let the number
of vertices of degree $3$ in $G$ be $x$, then
$(3x+4(12-x))/2\leq|E(G)|\leq 20$, hence $x\geq 8$. We discuss two
cases.

\noindent\textbf{Case 1. } There are four independent vertices
$u_0$, $u_1$, $u_2$ and $u_3$, of degree $3$ in $G$.

By $\alpha(G)\leq 4$ and $\nu(G)=12$, we have $|N(u_0)\cup N(u_1)
\cup N(u_2) \cup N(u_3)|= 8$.

By Lemma \ref{lemma:Plu:ind}, the neighbors of $u_0$ are
independent. If there are two neighbors $v_0$ and $v_1$ of $u_0$
that are not adjacent to $u_1$, $u_2$ or $u_3$, then $\{v_0,\
v_1,\ u_1,\ u_2,\ u_3\}$ is an independent set, contradicting
$\alpha(G)\leq 4$. So $|N(u_0)\cap(\cup_{1\leq j\leq
3}N(u_j))|\geq 2$.
Similarly, for any $0\leq i\leq 3$, $|N(u_i)\cap(\cup_{0\leq j\leq
3,\ j\neq i}N(u_j))|\geq 2$.
Hence, $|N(u_0)\cup N(u_1) \cup N(u_2) \cup N(u_3)| \leq 1\times
4+2\times 4/2=8$. Since equality holds, for every $u_i$,
$|N(u_i)\cap(\cup_{0\leq j\leq 3,\ j\neq i}N(u_j))|= 2$.
Furthermore, any vertices in $V(G)\backslash \{u_0,\ u_1,\ u_2,\
u_3\}$ can be adjacent to at most two vertices in $\{u_0,\ u_1,\
u_2,\ u_3\}$.

Suppose $|N(u_0)\cap N(u_1)|= 2$. Then $N(u_0)\cup \{u_2,\ u_3\}$
is an independent set of order $5$, a contradiction. Hence
$|N(u_0)\cap N(u_1)|\leq 1$, and similarly $|N(u_i)\cap
N(u_j)|\leq 1$ for any $0\leq i\neq j\leq 3$. So, for every $u_i$,
$0\leq i \leq 3$, $u_i$ has common neighbors with $u_j$ and $u_k$,
where $0\leq j\neq k \leq 3$, and $j,k \neq i$. Without loss of
generality, suppose that $u_0$ has common neighbors with $u_1$ and
$u_3$. Then $u_2$ also has common neighbors with $u_1$ and $u_3$.
Hence, $u_0$, $u_1$, $u_2$, $u_3$ and the common neighbors form a
cycle on $8$ vertices. Denote the cycle by
$C_1=u_0v_0u_1v_1u_2v_2u_3v_3u_0$ and the other vertices adjacent
to $u_i$ by $w_i$, $0\leq i\leq3$.

If $w_0w_1\notin E(G)$, $\{w_0,\ w_1,\ v_0,\ u_2,\ u_3\}$ is an
independent set of order $5$, a contradiction. So $w_0w_1\in
E(G)$. Similarly, $w_1w_2, w_2w_3, w_3w_0 \in E(G)$.

If $v_0v_2\in E(G)$, then $\{v_0v_2, w_1w_2\}$ is not contained in
any perfect matching of $G$, a contradiction. Therefore
$v_0v_2\notin E(G)$. Furthermore, $v_0$ can not be adjacent to
$v_1$ or $v_3$. Similarly, all $v_i$, $0\leq i\leq 3$, cannot be
adjacent to each other. Hence every $v_i$, $0\leq i\leq 3$, sends
edges to some $w_j$, $0\leq j \leq 3$, and the number of such
edges is at least $4$. Then, $E(G) \geq 8+4+4+4=20$. By our
assumption, equality holds, and each $v_i$, $0\leq i\leq 3$, sends
exactly one edge to $w_j$, for one $0\leq j \leq3$.

The vertex $v_0$ can only be adjacent to $w_2$ or $w_3$. Without
lose of generality, suppose $v_0w_3\in E(G)$. The vertex $v_1$ can
only be adjacent to $w_0$ or $w_3$. If $v_1w_3\in E(G)$, then
$\{u_2v_2,\ u_0v_3\}$ is not contain in any perfect matching of
$G$, a contradiction. Therefore, we must have $v_1w_0\in E(G)$.
Similarly $v_2w_1,\ v_3w_2\in E(G)$. So, $G$ is isomorphic to the
graph showed in Figure \ref{figure:nu12ep20}.

\noindent \textbf{Case 2.} There do not exist four independent
vertices of degree $3$ in $G$.

We claim that there exists a vertex of degree $3$, whose neighbors
are all of degree $3$. Suppose to the contrary that there is no
such a vertex. Obviously, there exists a vertex $u$ of degree $3$,
who has two neighbors, denoted by $v$ and $w$, of degree $3$ in
$G$. Then, each of $v$ and $w$ has at most one more neighbor of
degree $3$. So, there are at least three vertices of degree $3$ in
$G$ who are not adjacent to $v$ or $w$. By the condition of Case
2, these three vertices must form a triangle, a contradiction to
Lemma \ref{lemma:Plu:ind}. So there is a vertex, say $u_0$, of
degree $3$ in $G$, whose neighbors are all of degree $3$.

By Lemma \ref{lemma:Plu:ind}, $N(u_0)$ is independent. By the
condition of Case 2, any other vertex of degree $3$ must be
adjacent to some vertices in $N(u_0)$. Hence, there is a neighbor
$u_1$ of $u_0$, who is adjacent to other two vertices of degree
$3$. Denote the other neighbors of $u_0$ by $v_0$ and $v_1$, and
the other neighbors of $u_1$ by $v_2$ and $v_3$. There are at
least two more vertices, say $w_0$ and $w_1$, of degree $3$ in
$G$.

Since there are no four independent vertices of degree $3$, there
is at least one edge among $v_0$, $v_1$, $v_2$ and $v_3$. But
$v_0v_1,\ v_2v_3\notin E(G)$. Without lose of generality we assume
that $v_1v_2\in E(G)$. Since $\{u_0,\ v_2,\ v_3\}$ is an
independent set, by the condition of Case 2, both $w_0$ and $w_1$
send some edges to $\{v_2,\ v_3\}$. Similarly, both $w_0$ and
$w_1$ send some edges to $\{v_0,\ v_1\}$.

if $w_0w_1\in E(G)$, the subgraph of $G$ induced by $\{u_0,\ u_1,\
v_0,\ v_1,\ v_2,\ v_3,\ w_0,\ w_1\}$ sends at most two edges to
the other part of $G$, so $\kappa(G)\leq \kappa^\prime(G)\leq 2$,
contradicting Lemma \ref{lemma:Plu:con}. Hence $w_0w_1\notin
E(G)$.

If $v_0$ is not adjacent to $w_0$ or $w_1$, then $u_1$, $v_0$,
$w_0$, $w_1$ are four independent vertices of degree $3$,
contradicting the condition of Case 2. So $v_0$, and similarly
$v_1$, $v_2$ and $v_3$, must be adjacent to $w_0$ or $w_1$. Since
both $w_0$ and $w_1$ send some edges to $\{v_0,\ v_1\}$ and
$\{v_2,\ v_3\}$, without lose of generality, we can assume that
$w_0v_0,\ w_1v_1\in E(G)$. Then $w_1$ must be adjacent to $v_3$,
and $v_2$ must be adjacent to $w_0$. But then $\{u_0v_0, w_1v_3\}$
is not contained in any perfect matching of $G$, contradicting
$2$-extendibility of $G$.

Therefore, there can be only one $2$-extendable non-bipartite
graph with $12$ vertices and no more than $20$ edges, upto
isomorphism, as shown in Figure \ref{figure:nu12ep20}.
\end{proof}

\begin{theorem} \label{theorem:nu14to18}
When $\nu=14$, $16$ or $18$, $\varepsilon(\nu,2)=3\nu/2$.
\end{theorem}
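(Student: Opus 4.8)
The plan is to prove the two inequalities $\varepsilon(\nu,2)\ge 3\nu/2$ and $\varepsilon(\nu,2)\le 3\nu/2$ separately. The lower bound is already in hand: since a $2$-extendable graph is $3$-connected by Lemma~\ref{lemma:Plu:con}, we have $\delta(G)\ge 3$, whence $|E(G)|\ge 3\nu/2$. The whole content of the theorem is therefore the upper bound, i.e.\ producing a $2$-extendable non-bipartite graph on $\nu$ vertices with exactly $3\nu/2$ edges for each $\nu\in\{14,16,18\}$. Observe that equality $|E(G)|=3\nu/2$ together with $\delta(G)\ge 3$ forces every vertex to have degree exactly $3$, so any extremal $G$ must be \emph{cubic}. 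The task thus reduces to: for each of $\nu=14,16,18$, exhibit a cubic, non-bipartite, $2$-extendable graph.

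First I would explain why these three orders require individual treatment, in contrast to the large-$\nu$ regime. For even $\nu\ge 20$ (with the usual exception) a fullerene graph exists, and by Lemma~\ref{lemma:zz1:ful_ext} every fullerene is a cubic $2$-extendable graph; it is non-bipartite since its pentagonal faces are odd cycles, so fullerenes settle the upper bound at once for large $\nu$. Fullerenes do not exist for $\nu=14,16,18$, so here I would instead write down explicit cubic graphs, displayed as figures in the style of Figures~\ref{figure:nu10ep19} and~\ref{figure:nu12ep20}. Natural candidates are highly symmetric cubic graphs on $14$, $16$, $18$ vertices built from a spanning cycle together with a regular pattern of chords (for instance prism-type graphs $C_n\times K_2$ over an odd cycle, M\"obius ladders, or generalized Petersen graphs), each chosen to contain an odd cycle and hence be non-bipartite; non-bipartiteness is then certified trivially by pointing to one odd cycle.

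The main work, and the main obstacle, is verifying that each displayed graph is $2$-extendable. I would use the criterion that a connected graph $G$ carrying a perfect matching is $2$-extendable if and only if, for every pair of independent edges $e_1,e_2$, the graph $G-V(e_1)-V(e_2)$ has a perfect matching (since $\{e_1,e_2\}$ extends to a perfect matching of $G$ exactly when this deletion does). To keep the case analysis finite and short I would exploit the automorphisms of the chosen graphs: if the graph is vertex- and edge-transitive, then up to an automorphism there are only a few orbits of $2$-matchings $\{e_1,e_2\}$, distinguished by the relative position of $e_1$ and $e_2$ along the spanning cycle, and it suffices to verify that deleting the four endpoints leaves a perfect matching for one representative of each orbit. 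Each such verification is a direct inspection, or, when more convenient, a check of Tutte's condition $o(G'-S)\le|S|$ on the deleted graph $G'$; the remaining routine checks would be relegated to the appendix, exactly as the footnote to Theorem~\ref{theorem:nu10} anticipates. Combining the lower bound with these three constructions yields $\varepsilon(\nu,2)=3\nu/2$ for $\nu=14,16,18$, completing the proof and, together with Theorems~\ref{theorem:nu10} and~\ref{theorem:nu12}, filling in the cases of Theorem~\ref{theorem:2-ext_min_edges}.
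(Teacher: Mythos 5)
Your overall strategy coincides with the paper's: the lower bound $\varepsilon(\nu,2)\ge 3\nu/2$ comes from $\delta(G)\ge 3$, so the whole theorem reduces to exhibiting, for each $\nu\in\{14,16,18\}$, a cubic non-bipartite $2$-extendable graph, with the routine matching-extension checks compressed by symmetry and deferred to an appendix. This is exactly what the paper does (Figures \ref{figure:nu14ep21}, \ref{figure:nu16ep24}, \ref{figure:nu18ep27} and \ref{section:Appendix A}).

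The gap is that you never actually produce a graph that works, and the ``natural candidates'' you name are, for the most part, provably \emph{not} $2$-extendable, so the plan cannot be executed as written. The prism $C_7\times K_2$ over an odd cycle contains the independent set $\{u_0,u_2,u_4,v_1,v_3,v_5\}$ of size $6>\nu/2-2=5$, so by Lemma \ref{lemma:mv1:ext_ind} it is not $2$-extendable; the same independence-number obstruction kills $C_9\times K_2$ for $\nu=18$ and the M\"obius ladder on $16$ vertices (which has the independent set $\{0,2,4,6,9,11,13\}$ of size $7>6$), while the M\"obius ladders on $14$ and $18$ vertices are bipartite outright. This is the same phenomenon that forces $\varepsilon(10,2)=19>15$: cubic non-bipartite graphs have a strong tendency to contain large independent sets, and the existence of cubic examples that avoid this is precisely the content of the theorem, not a routine afterthought. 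The paper's graphs $G_3$, $G_4$, $G_5$ are built from two disjoint cycles of carefully chosen \emph{unequal} lengths (an $8$-cycle and a $6$-cycle for $\nu=14$, a $4$-cycle and a $12$-cycle for $\nu=16$, a $6$-cycle and a $12$-cycle for $\nu=18$) joined by a specific pattern of edges; you would need to find and verify such graphs (your orbit-by-orbit verification scheme is fine once a correct graph is in hand, and is what the appendix does), or at minimum first screen your candidates against Lemmas \ref{lemma:Plu:ind} and \ref{lemma:mv1:ext_ind}.
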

\begin{proof}
We already have $\varepsilon(\nu,2)\geq3\nu/2$. To prove the
equality, we show $2$-extendable graphs on $\nu=14$, $16$ and $18$
vertices and $3\nu/2$ edges in Figure \ref{figure:nu14ep21},
\ref{figure:nu16ep24} and \ref{figure:nu18ep27}, respectively.
\end{proof}

\begin{theorem} \label{theorem:nu20}
When $\nu\geq 20$, $\varepsilon(\nu,2)=3\nu/2$.
\end{theorem}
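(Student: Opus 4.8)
The plan is to prove the matching upper bound $\varepsilon(\nu,2)\le 3\nu/2$ by exhibiting, for every even $\nu\ge 20$, a cubic graph on $\nu$ vertices that is $2$-extendable and non-bipartite. Such a graph has exactly $3\nu/2$ edges, and combined with the inequality $\varepsilon(\nu,2)\ge 3\nu/2$ already established from $\delta(G)\ge 3$, this yields equality. The natural source of these graphs is fullerene graphs: a fullerene graph is $3$-regular, so it has exactly $3\nu/2$ edges; it is $2$-extendable by Lemma \ref{lemma:zz1:ful_ext}; and it is non-bipartite because it contains pentagonal faces, whose boundaries are $5$-cycles, and a graph with an odd cycle cannot be bipartite. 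Thus any fullerene graph on $\nu$ vertices already witnesses $\varepsilon(\nu,2)=3\nu/2$.

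First I would invoke the existence range recorded before Lemma \ref{lemma:zz1:ful_ext}: a fullerene graph on $n$ vertices exists for every even $n\ge 20$ except $n=22$. Consequently, for $\nu=20$ and for all even $\nu\ge 24$ the fullerene construction settles the theorem immediately, and the only remaining value is $\nu=22$.

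For $\nu=22$ the fullerene argument is unavailable, so I would give an explicit cubic non-bipartite graph $G$ on $22$ vertices (displayed in a figure), with $33$ edges and containing an odd cycle to guarantee non-bipartiteness. Since $G$ is not a fullerene, Lemma \ref{lemma:zz1:ful_ext} does not apply, and its $2$-extendability must instead be verified directly — for example through Plummer's neighborhood criterion adapted to the non-bipartite setting, or by the explicit verification scheme used for the other small witness graphs in this paper. This single verification, rather than any global argument, is the main obstacle: one must confirm that every size-$2$ matching of $G$, both the adjacent and the independent configurations, extends to a perfect matching, which requires inspecting the local structure around each pair of edges.

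Assembling the two cases — fullerene graphs for every even $\nu\ge 20$ with $\nu\ne 22$, together with the ad hoc cubic graph for $\nu=22$ — produces a $2$-extendable non-bipartite cubic graph on $\nu$ vertices for every even $\nu\ge 20$. Hence $\varepsilon(\nu,2)=3\nu/2$ throughout this range, completing the proof and, together with Theorems \ref{theorem:nu10}, \ref{theorem:nu12} and \ref{theorem:nu14to18}, the full determination of $\varepsilon(\nu,2)$ in Theorem \ref{theorem:2-ext_min_edges}.
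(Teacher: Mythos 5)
Your proposal is correct and follows essentially the same route as the paper: it uses fullerene graphs (cubic, non-bipartite via pentagonal faces, $2$-extendable by Lemma \ref{lemma:zz1:ful_ext}) for every even $\nu\ge 20$ except $\nu=22$, and handles $\nu=22$ with an explicit cubic $2$-extendable non-bipartite graph whose extendability is checked directly, exactly as the paper does with the graph $G_6$ of Figure \ref{figure:nu22ep33}.
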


\begin{proof}
We have $\varepsilon(\nu,2)\geq 3\nu/2$. To prove the equality we
must find $3$-regular $2$-extendable non-bipartite graphs on $\nu$
vertices for all even $\nu\geq 20$. By definition and Lemma
\ref{lemma:zz1:ful_ext}, Fullerene graphs are $3$-regular
$2$-extendable non-bipartite graphs, and Fullerene graphs with
$\nu$ vertices exists for all even $\nu \geq 20$, except $\nu=22$.
So, we only need to construct a $3$-regular $2$-extendable
non-bipartite graph on $22$ vertices. One such graph is shown in
Figure \ref{figure:nu22ep33}.
\end{proof}

Now we can prove Theorem \ref{theorem:2-ext_min_edges}.
\begin{proof}
Let $G$ be a $2$-extendable graph on $\nu$ vertices with minimum
size. By Theorem \ref{lemma:LY:largek}, when $\nu\leq 8$,
$\delta(G) \geq \kappa(G) \geq 4$. For $\nu=6$, it is not hard to
check that $G$ must be $K_6$, thus $\varepsilon(6,2)=15$. For
$\nu=8$, we have $\varepsilon(8,2)\geq 16$, and it is obvious that
 the graph shown in Figure \ref{figure:nu8ep16} is a $2$-extendable non-bipartite graph with $8$ vertices and $16$
edges, hence $\varepsilon(8,2)=16$. For other values of $\nu$ the
results follow from Theorem \ref{theorem:nu10},
\ref{theorem:nu12}, \ref{theorem:nu14to18} and \ref{theorem:nu20}.
\end{proof}

\begin{figure}[!htbp]
\centering
\includegraphics[width=0.26\linewidth]{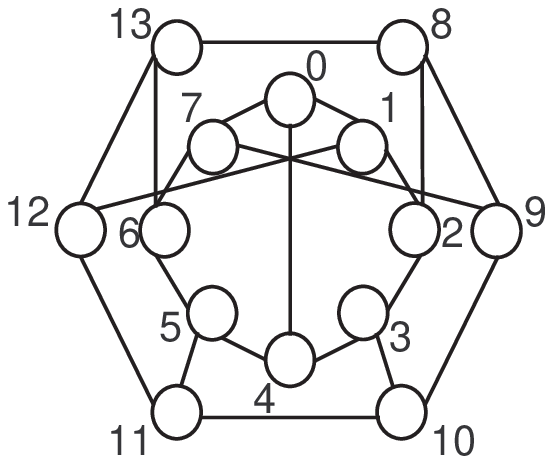}
\caption{$G_3$, a $2$-extendable non-bipartite graph with $14$
vertices and $21$ edges} \label{figure:nu14ep21}
\end{figure}

\begin{figure}[!htbp]
\centering
\includegraphics[width=0.25\linewidth]{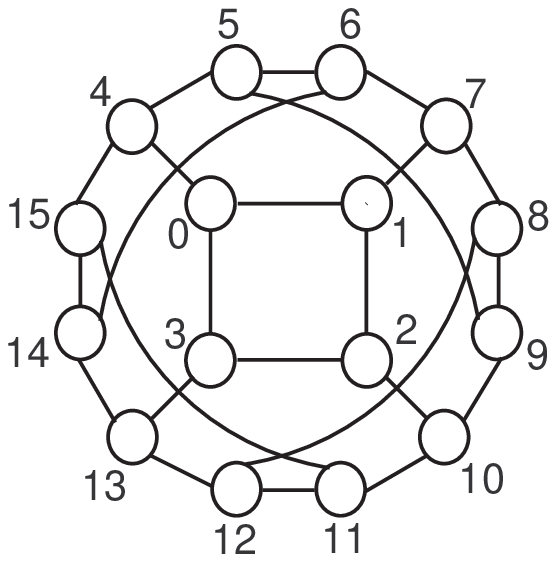}
\caption{$G_4$, a $2$-extendable non-bipartite graph with $16$
vertices and $24$ edges} \label{figure:nu16ep24}
\end{figure}

\begin{figure}[!htbp]
\centering
\includegraphics[width=0.26\linewidth]{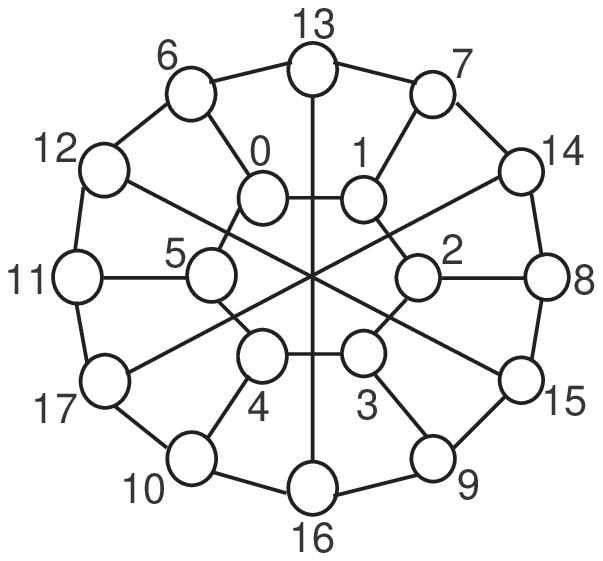}
\caption{$G_5$, a $2$-extendable non-bipartite graph with $18$
vertices and $27$ edges} \label{figure:nu18ep27}
\end{figure}

\begin{figure}[!htbp]
\centering
\includegraphics[width=0.31\linewidth]{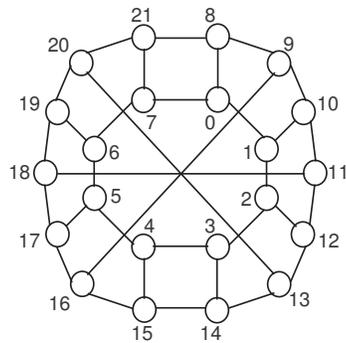}
\caption{$G_6$, a $2$-extendable non-bipartite graph with $22$
vertices and $33$ edges} \label{figure:nu22ep33}
\end{figure}

\begin{figure}[!htbp]
\centering
\includegraphics[width=0.15\linewidth]{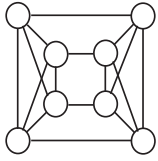}
\caption{a $2$-extendable non-bipartite graph with $8$ vertices
and $16$ edges} \label{figure:nu8ep16}
\end{figure}

\newpage

\section{Final remarks}
We finish our paper with some ideas on Problem
\ref{problem:non-bipartite} for general $k$.

By Lemma \ref{lemma:ZWL:eqv}, the set of $k$-extendable
non-bipartite graphs and the set of $2k$-factor-critical graphs
coincide when $\nu(G)\leq 4k-2$. We have found
$2k$-factor-critical graphs with minimum size among Harary graphs
in Section \ref{section:Harary}. So we have
$$\varepsilon (\nu, k) =
(2k+1)\nu/2 \hspace{2em}\mbox{if}~~ \nu \le 4k-2.$$

By Lemma \ref{lemma:LY:largek}, when $\nu(G)=4k$, the connectivity
of a $k$-extendable non-bipartite graph $G$ is $2k$. Therefore
$\delta(G)\geq \kappa(G)\geq 2k$. This bound is obtained by the
following graph. Let $H_1$ and $H_2$ be two copies of $K_{2k}$
where $V(H_1)=\{u_0,\ u_1,\ \ldots, u_{2k-1}\}$ and
$V(H_2)=\{v_0,\ v_1,\ \ldots,\ v_{2k-1}\}$. And construct $G$ by
joining every $u_i$ to $v_i$, $0\leq i \leq 2k-1$. It is not hard
to check that $G$ is a $k$-extendable graph with $4k$ vertices and
regular degree $2k$. Therefore
$$\varepsilon(4k, k)=4k^2.$$

When $\nu \geq 4k+2$, an example in \cite{LouYu2004} shows that
the connectivity of a $k$-extendable graph $G$ with $\nu$ vertices
can be $k+1$, that is, the bound given by Lemma
\ref{lemma:Plu:con}. The case that $k=2$ gives us some hints that
there may exist $k$-extendable regular graphs with degree $k+1$,
and hence $\varepsilon(\nu, k)=(k+1)\nu/2$, when $\nu$ is large.
Let $\nu_0$ be the minimum even integer such that
$\varepsilon(\nu_0,k)=(k+1)\nu_0/2$. Then $\nu_0 \geq 4k+2$.
Assuming that for a given $k$, the function $\varepsilon(\nu, k)$
is increasing for even integer $\nu$. We have $(\nu_0-4k)/2\leq
\varepsilon(\nu_0,k)-\varepsilon(4k,k) = (k+1)\nu_0/2-4k^2$, that
is, $\nu_0 \geq 8k-4$. Therefore, we have the following
conjecture.

\begin{conjecture}
For a given integer $k>0$, if $G$ is a $k$-extendable
non-bipartite graph with edge number $\nu(G)(k+1)/2$, then
$\nu(G)\geq 8k-4$.
\end{conjecture}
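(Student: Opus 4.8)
The hypothesis $e(G)=\nu(k+1)/2$ together with $\delta(G)\ge k+1$ (Lemma~\ref{lemma:Plu:con}) forces $G$ to be $(k+1)$-regular, and applying Lemma~\ref{lemma:Plu:ind} at every vertex then shows that $G$ is triangle-free; in particular $G$ is exactly $(k+1)$-connected. By Lemma~\ref{lemma:mv1:ext_ind} we have $\alpha(G)\le \nu/2-k$, so the desired bound $\nu\ge 8k-4$ would follow at once from the lower estimate $\alpha(G)\ge 3k-2$. I record this reformulation because it replaces a global statement by the single task of producing one large independent set.

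The route I would pursue first is to make rigorous the monotonicity heuristic already sketched in this section. It suffices to prove the step inequality $\varepsilon(\nu,k)\ge \varepsilon(\nu-2,k)+1$ for every even $\nu>4k$: iterating it downward from the value $\varepsilon(4k,k)=4k^2$ established above gives $\varepsilon(\nu,k)\ge 4k^2+(\nu-4k)/2$, and substituting $\varepsilon(\nu,k)=(k+1)\nu/2$ (valid because $G$ is $(k+1)$-regular) and simplifying yields precisely $\nu\ge 8k-4$. To prove the step I would start from a minimum-size $k$-extendable non-bipartite graph on $\nu$ vertices, fix an edge $xy$ lying in a perfect matching, and delete $x$ and $y$; this removes $d(x)+d(y)-1\ge 2k+1$ edges. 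I would then add back a small repair set of edges among the former neighbours of $x$ and $y$, staying within a budget of $d(x)+d(y)-2$ added edges so that the net change is a loss of at least one edge. In the extremal regular case this repair takes a clean form: since $G$ is triangle-free the sets $N(x)\setminus\{y\}$ and $N(y)\setminus\{x\}$ are disjoint, each vertex of them has lost exactly one unit of degree, and one adds a perfect matching between the two sets chosen to avoid already-present edges (no multi-edges) and pairs with a common neighbour (no triangles, which are forbidden in a $(k+1)$-regular $k$-extendable graph by Lemma~\ref{lemma:Plu:ind}), restoring $(k+1)$-regularity on $\nu-2$ vertices with a net loss of $k+1$ edges; non-bipartiteness is preserved by choosing $xy$ off a fixed odd cycle.

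The \textbf{main obstacle} is verifying that the reduced graph is again $k$-extendable, that is, that the repair edges can be chosen so that \emph{every} $k$-matching of the smaller graph extends to a perfect matching. Deleting a matched pair from a $k$-extendable graph is only guaranteed to leave a $(k-1)$-extendable graph, so the few added edges must carry the entire burden of restoring full $k$-extendability; proving that a repair with the required avoidance properties exists and has this effect is the heart of the matter, and I expect the threshold $\nu=8k-4$ to appear exactly as the point below which no suitable edge $xy$ and repair matching can be found. A Hall-type count on the bipartite ``non-edge, non-common-neighbour'' graph between $N(x)\setminus\{y\}$ and $N(y)\setminus\{x\}$ should yield existence of the matching once $\nu$ is large, thereby reducing the whole conjecture to controlling this purely local structure.

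As a cross-check and an alternative line of attack, the reformulation $\alpha(G)\ge 3k-2$ can be approached through the cut structure guaranteed by Lemmas~\ref{lemma:Plu:con} and~\ref{lemma:LY:largek}: a minimum vertex cut $S$ has $|S|=k+1$, every vertex of $S$ meets every component of $G-S$, and independent sets drawn from distinct components of $G-S$ union to an independent set of $G$. When $G$ contains two vertices at distance at least $4$, triangle-freeness makes the union of their neighbourhoods independent of size $2k+2$, already forcing $\nu\ge 6k+4$; the remaining low-diameter, dense, small-$\nu$ case is exactly the hard range, where a Ramsey-type estimate for triangle-free graphs together with non-bipartiteness would be needed to push $\alpha(G)$ up to $3k-2$. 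Either route isolates the same essential difficulty: extracting from $k$-extendability alone enough local spreading to drive the independence number, equivalently the vertex count, past the $8k-4$ threshold.
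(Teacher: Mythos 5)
The statement you are proving is posed in the paper as an open \emph{conjecture}: the paper offers no proof, only the heuristic that \emph{if} $\varepsilon(\nu,k)$ were known to satisfy $\varepsilon(\nu,k)\geq\varepsilon(\nu-2,k)+1$ for even $\nu>4k$, then iterating down to the established value $\varepsilon(4k,k)=4k^2$ and comparing with $(k+1)\nu_0/2$ would give $\nu_0\geq 8k-4$. Your proposal reproduces exactly this heuristic and then, to your credit, names its missing ingredient --- but it does not supply it, so there is a genuine gap and the statement remains unproved. The critical failure is in the deletion-and-repair step: removing a matched pair $x,y$ from a $k$-extendable graph is only guaranteed to leave a $(k-1)$-extendable graph, and you give no argument that adding a matching between $N(x)\setminus\{y\}$ and $N(y)\setminus\{x\}$ restores $k$-extendability. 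Indeed it generally cannot be forced by a Hall-type count alone: a $k$-matching of the reduced graph that avoids every repair edge must still extend to a perfect matching, and the repair edges contribute nothing to that extension, so the obstruction is global (a Tutte/K\"onig-type deficiency created by deleting $x$ and $y$), not the local multi-edge/triangle avoidance your count addresses. A further, quieter problem: the iteration needs the step inequality for \emph{every} even $\nu'$ with $4k<\nu'\leq\nu_0$, and for those intermediate values the minimum-size graphs need not be $(k+1)$-regular or triangle-free, so the clean disjointness and degree bookkeeping you rely on ("each vertex of them has lost exactly one unit of degree") is unavailable precisely where the step must be applied.

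Your reformulation via the independence number is a correct and potentially useful reduction --- $(k+1)$-regularity and Lemma~\ref{lemma:Plu:ind} do give triangle-freeness, and $\alpha(G)\geq 3k-2$ together with Lemma~\ref{lemma:mv1:ext_ind} would yield $\nu\geq 8k-4$ --- but the bound $\alpha(G)\geq 3k-2$ is itself unproved; your distance-$4$ argument only reaches $\alpha(G)\geq 2k+2$, hence $\nu\geq 6k+4$, and the remaining small-diameter case is exactly where the conjecture lives. (A rigorous fragment you do essentially have: Lemma~\ref{lemma:LY:largek} plus $\kappa(G)\leq\delta(G)=k+1<2k$ forces $\nu>4k$, but that is far from the target.) In short, neither route in your proposal closes the conjecture, and the paper itself does not claim to.
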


\section*{Acknowledgments}
We would like to thank the referees for their careful reading and
many valuable suggestions that have greatly improved the
presentation of the paper.

\newpage

\appendix

\renewcommand\thesection{\appendixname~\Alph{section}}

\renewcommand\thesubsection{\Alph{section}.\arabic{subsection}}

\renewcommand\thetable{\Alph{section}.\arabic{table}}

\section{The verification of $2$-extendibility} \label{section:Appendix
A}

In this appendix, we verify the $2$-extendibility of the graphs in
Figure \ref{figure:nu10ep19}, Figure \ref{figure:nu12ep20}, Figure
\ref{figure:nu14ep21}, Figure \ref{figure:nu16ep24}, Figure
\ref{figure:nu18ep27} and Figure \ref{figure:nu22ep33}, which have
been named as $G_i$, $1\leq i \leq 6$. By definition, we check
that every non-adjacent edge pair in $G_i$ is contained in a
perfect matching of it. Making advantage of the symmetry and the
cycle structures of the graphs, we only need to check a small part
of all cases.

We verify the extendibility of each $G_i$ in a separate section.
For convenience, we label the $\nu$ vertices of a graph by $0$,
$1$, $\ldots$, $\nu-1$, as shown in the figures. An edge with
endvertices $i$ and $j$ is denoted by $i$-$j$ when we list it in
the tables, and $(i,j)$ in other contents.

\subsection{$2$-extendibility of $G_1$}

Let $E_0=\{e_0,\ e_1\}$ be an edge pair in $G_1$ and let
$G_1^\prime\ =G_1- \{(0,8),\ (2,4),\ (1,9),\ (3,5)\}$. Then
$G_1^\prime$ is isomorphic to $H^B_{3,10}$ defined in Section
\ref{section:Harary}, which is $2$-extendable. Hence, if $E_0 \cap
\{ (0,8),\ (2,4),\ (1,9),\ (3,5)\}\\=\emptyset$, then $E_0$ is
contained in a perfect matching of $G_1^\prime$, which is also a
perfect matching of $G_1$.

If $E_0 \subset \{(0,8),\ (2,4),\ (1,9),\ (3,5)\}$, then $E_0$ is
contained in the perfect matching $\{(0,8),\ (2,4),\ (1,9),\\
(3,5),\ (6,7)\}$.

What left is the case that $|E_0\cap \{(0,8),\ (2,4),\ (1,9),\
(3,5)\}|=1$. By symmetry, we only need to verify that every such
edge pair containing $(0,8)$ or $(2,4)$ is contained in a perfect
matching of $G_1$.

To verify that some edge pairs can be extended to perfect
matchings, we list several perfect matchings, so that each edge
pair is contained in at least one of them.

We arrange the verification data in tables. The first column of a
table contains the edges. The second column of a table contains
the edge pairs containing the edge. For convenience, we just list
the other edge in each edge pair. The third column lists several
perfect matchings so that each edge pair in the second column is
contained in at least one of them.

\begin{table}[!h]
\caption{$2$-extendibility of $G_1$} \centering
\begin{tabular}{|l|l|l|}
\hline
Edge & Edge Pairs & Perfect Matchings \\

\hline \multirow{2}{*} {0-8} & {1-2, 2-3, 2-5, 3-4, 4-5,} &
{\{0-8,1-2,3-5,4-7,6-9\} \{0-8,1-9,2-3,4-5,6-7\}} \\
& {4-7, 5-6, 6-7, 6-9.} & {\{0-8,1-9,2-5,3-4,6-7\} \{0-8,1-9,2-3,4-7,5-6\}} \\

\hline \multirow{2}{*} {2-4} & {0-1, 0-3, 0-9, 1-8, 5-6,} &
\{2-4,0-1,3-5,6-7,8-9\} \{2-4,0-3,1-9,5-6,7-8\}\\
& {6-7, 6-9, 7-8, 8-9.} & \{2-4,0-9,1-8,3-5,6-7\} \{2-4,0-1,3-5,6-9,7-8\}\\
\hline
\end{tabular}
\end{table}

\subsection{$2$-extendibility of $G_2$}
Let $E_0=\{e_0,\ e_1\}$ be an edge pair in $G_2$. We verify that
$E_0$ is contained in a perfect matching of $G_2$. Let $C_1=(0,\
1,\ 2,\ 3,\ 4,\ 5,\ 6,\ 7,\ 0)$ and $(C_2=8,\ 9,\ 10,\ 11,\ 8)$ be
the two cycles in $G_2$. If $|E_0\cap E(C_1)|=1$ and $|E_0\cap
E(C_2)|=1$, then $E_0$ can be easily extended to a perfect
matching of $G_2$, which is composed of a perfect matching of
$C_1$ and a perfect matching of $C_2$. Hence, we can assume that
$|E_0\cap E(C_1)|=0$ or $|E_0\cap E(C_2)|=0$.

Let $e=v_0v_1$ and $f=v_2v_3$ be two edges on an even cycle $C$,
where $v_0$, $v_1$, $v_2$ and $v_3$ appear on $C$ as the order
listed. Since $C$ is even, the length of the segment of $C$ from
$v_1$ to $v_2$ and that of the segment of $C$ from $v_2$ to $v_1$
have the same parity. We say that two vertices $v_1$ and $v_2$ are
at an odd distance on $C$, if the length is odd. Furthermore, we
say that $e$ and $f$ are at an odd distance on $C$, if $v_1$ and
$v_2$ are at an odd distance on $C$. Note that by parity, $v_0$
and $v_3$ is also at an odd distance on $C$. Obviously, any edge
pair whose two edges are at an odd distance on $C_1$ or $C_2$ can
be easily extended to a perfect matching of $G$. So we skip these
edge pairs in our verification lists. Similar skipping is applied
in subsequent sections.

Firstly, we examine all edge pairs that contain an edge on $C_1$
but no edge on $C_2$. By symmetry, we only need to examine all
such edge pairs containing the edge $(0,1)$ or $(1,2)$. Then, we
examine all edge pairs that contain no edge on $C_1$ but an edge
on $C_2$. Again by symmetry, it suffices to examine all such edge
pairs containing $(8,9)$. Finally, we examine the edge pairs that
contains no edge on $C_1$ or $C_2$. By symmetry, we examine such
edge pairs containing $(0,8)$ or $(1,11)$.

\newpage

\begin{table}[!h]
\caption{$2$-extendibility of $G_2$} \centering
\begin{tabular}{|l|l|l|}
\hline
Edge & Edge Pairs & Perfect Matchings \\

\hline \multirow{2}{*} {0-1} & {3-4, 5-6, 2-9, 4-10, 6-11,} &
\{0-1,2-9,3-4,5-6,7-10,8-11\} \{0-1,2-9,3-8,4-5,6-7,10-11\} \\
& {3-8, 5-9, 7-10.} & \{0-1,2-3,4-10,5-9,6-7,8-11\} \{0-1,2-3,4-5,6-11,7-10,8-9\} \\

\hline \multirow{2}{*} {1-2} & {4-5, 6-7, 0-8, 4-10, 6-11,} &
\{1-2,0-8,3-4,5-9,6-11,7-10\} \{1-2,0-7,3-8,4-5,6-11,9-10\} \\
& {3-8, 5-9, 7-10.} & \{1-2,0-7,3-8,4-10,5-9,6-11\} \{1-2,0-8,3-4,5-9,6-7,10-11\}\\

\hline {8-9} & {6-11, 4-10, 1-11, 7-10.} & \{8-9,0-7,1-11,2-3,4-10,5-6\} \{8-9,0-1,2-3,4-5,6-11,7-10\} \\

\hline \multirow{2}{*} {0-8} & {2-9, 4-10, 6-11, 1-11, 5-9,}
& {\{0-8,1-11,2-9,3-4,5-6,7-10\} \{0-8,1-11,2-3,4-10,5-9,6-7\}} \\
& {7-10.} & {\{0-8,1-2,3-4,5-9,6-11,7-10\}} \\

\hline \multirow{2}{*} {1-11} & {0-8, 2-9, 4-10, 3-8, 5-9,} & \{1-11,0-7,2-9,3-8,4-10,5-6\} \{1-11,0-8,2-3,4-10,5-9,6-7\} \\
& {7-10.} & \{1-11,0-8,2-9,3-4,5-6,7-10\} \\
\hline
\end{tabular}
\end{table}

\subsection{$2$-extendibility of $G_3$} \label{G_3}
Let $E_0=\{e_0,\ e_1\}$ be an edge pair in $G_3$. We verify that
$E_0$ is contained in a perfect matching of $G_3$. Let $C_1=(0,\
1,\ 2,\ 3,\ 4,\ 5,\ 6,\ 7,\ 0)$ and $C_2=(8,\ 9,\ 10,\ 11,\ 12,\
13,\ 8)$ be the two cycles in $G_3$. If $|E_0\cap E(C_1)|=1$ and
$|E_0\cap E(C_2)|=1$, then $E_0$ can be extended to a perfect
matching of $G_3$, which is composed of a perfect matching of
$C_1$ and a perfect matching of $C_2$. So we can assume that
$|E_0\cap E(C_1)|=0$ or $|E_0\cap E(C_2)|=0$. Firstly, we examine
all edge pairs containing an edge on $C_1$ but no edge on $C_2$.
By symmetry, we examine all such edge pairs containing $(0,1)$,
$(1,2)$ , $(2,3)$ or $(3,4)$. Then, we examine all edge pairs that
contain no edge on $C_1$, but an edge on $C_2$. By symmetry, we
examine all such edge pairs containing $(8,9)$, $(8,13)$, $(9,10)$
or $(10,11)$. Finally, if $E_0$ contains no edge on $C_1$ or
$C_2$, then $E_0$ must be contained in the perfect matching
$\{(0,4),\ (1,12),\ (2,8),\ (3,10),\ (5,11),\ (6,13),\ (7,9)\}$.

\begin{table}[!h]
\caption{$2$-extendibility of $G_3$} \centering
\begin{tabular}{|l|l|l|}
\hline
Edge & Edge Pairs & Perfect Matchings \\

\hline \multirow{2}{*} {0-1} & {3-4, 5-6, 2-8, 3-10, 5-11,} &
\{0-1,2-8,3-4,5-6,7-9,10-11,12-13\}
\{0-1,2-8,3-10,4-5,6-13,7-9,11-12\}\\
& {6-13, 7-9} &\{0-1,2-8,3-4,5-11,6-7,9-10,12-13\} \\

\hline \multirow{2}{*} {1-2} & {4-5, 6-7, 0-4, 3-10, 5-11,} &
\{1-2,0-7,3-10,4-5,6-13,8-9,11-12\}
\{1-2,0-4,3-10,5-6,7-9,8-13,11-12\}\\
& {6-12, 7-13} &\{1-2,0-4,3-10,5-11,6-7,8-9,12-13\} \\

\hline \multirow{2}{*} {2-3} & {0-7, 5-6, 0-4, 1-12, 5-11,} &
\{2-3,0-4,1-12,5-6,7-9,8-13,10-11\}
\{2-3,0-7,1-12,4-5,6-13,8-9,10-11\}\\
& {6-13, 7-9} &\{2-3,0-4,1-12,5-11,6-7,8-13,9-10\} \\

\hline \multirow{2}{*} {3-4} & {0-1, 6-7, 1-12, 2-8, 5-11,} &
\{3-4,0-1,2-8,5-6,7-9,10-11,12-13\}
\{3-4,0-7,1-12,2-8,5-11,6-13,9-10\}\\
& {6-13, 7-9} &\{3-4,0-1,2-8,5-11,6-7,9-10,12-13\} \\

\hline \multirow{2}{*} {8-9} & {11-12, 0-4, 1-12, 3-10, 5-11,} &
\{8-9,0-4,1-2,3-10,5-11,6-7,12-13\}
\{8-9,0-7,1-12,2-3,4-5,6-13,10-11\}\\
& {6-13} &\{8-9,0-7,1-2,3-10,4-5,6-13,11-12\} \\

\hline \multirow{2}{*} {8-13} & {10-11, 0-4, 1-12, 3-10, 5-11,} &
\{8-13,0-4,1-2,3-10,5-6,7-9,11-12\}
\{8-13,0-4,1-12,2-3,5-11,6-7,9-10\}\\
& {7-9} &\{8-13,0-4,1-12,2-3,5-6,7-9,10-11\} \\

\hline \multirow{2}{*} {9-10} & {12-13, 0-4, 1-12, 2-8, 5-11,} &
\{9-10,0-4,1-12,2-3,5-11,6-7,8-13\}
\{9-10,0-7,1-12,2-8,3-4,5-11,6-13\}\\
& {6-13} &\{9-10,0-1,2-8,3-4,5-11,6-7,12-13\} \\

\hline \multirow{2}{*} {10-11} & {8-13, 0-4, 1-12, 2-8, 6-13,} &
\{10-11,0-4,1-12,2-3,5-6,7-9,8-13\}
\{10-11,0-1,2-8,3-4,5-6,7-9,12-13\}\\
& {7-9} &\{10-11,0-7,1-12,2-3,4-5,6-13,8-9\} \\

\hline
\end{tabular}
\end{table}

\subsection{$2$-extendibility of $G_4$}

Let $C_1=(0,\ 1,\ 2,\ 3,\ 0)$ and $C_2=(4,\ 5,\ 6,\ 7,\ 8,\ 9,\
10,\ 11,\ 12,\ 13,\ 14,\ 15,\ 4)$. The analysis is analog to that
in Section \ref{G_3}.

\subsection{$2$-extendibility of $G_5$}
Let $C_1=(0,\ 1,\ 2,\ 3,\ 4,\ 5,\ 0)$ and $C_2=(6,\ 13,\ 7,\ 14,\
8,\ 15,\ 9,\ 16,\ 10,\ 17,\ 11,\ 12,\ 6)$. The analysis is analog
to that in Section \ref{G_3}.

\subsection{$2$-extendibility of $G_6$}

Let $C_1=(0,\ 1,\ 2,\ 3,\ 4,\ 5,\ 6,\ 7,\ 0)$ and $C_2=(8,\ 9,\
10,\ 11,\ 12,\ 13,\ 14,\ 15,\ 16,\ 17,\ 18,\ 19,\ 20,\ 21,\ 8)$.
The analysis is analog to that in Section \ref{G_3}.

\end{document}